\documentclass{article}
\usepackage[utf8]{inputenc}

\usepackage[utf8]{inputenc}

\usepackage{comment}

\usepackage{graphicx} 
\usepackage{amsmath,amsthm,verbatim,amssymb,amsfonts,amscd, graphicx}
\usepackage[utf8]{inputenc}
\usepackage{graphics}
\usepackage{hyperref}
\usepackage[baseline]{euflag}
\usepackage{enumitem}
\usepackage{apptools}
\usepackage{titlesec}
\usepackage{appendix}
\usepackage{xcolor}
\usepackage{dsfont}
\usepackage{authblk}

\AtAppendix{\counterwithin{lemma}{section}}

\theoremstyle{plain}

\newtheorem{theorem}{Theorem}

\newtheorem{lemma}[theorem]{Lemma}
\newtheorem{proposition}[theorem]{Proposition}

\newtheorem{question}[theorem]{Question} 

\newtheorem{example}[theorem]{Example} 

\theoremstyle{definition}

\newcommand{\RL}{\mathbb{R}}

\newcommand{\dd}{\,d}
\newcommand{\1}{\mathbf 1}

\usepackage[textwidth=2cm]{todonotes}

\DeclareMathOperator{\supp}{supp}

\title{Entropic analogues of Gr{\"u}nbaum's inequality}

\date{}

\author[1]{Matthieu Fradelizi}
\author[2]{Lampros Gavalakis}
\author[3]{Martin Rapaport}

\affil[1]{Univ Gustave Eiffel
                      }
                      
\affil[2]{University of Cambridge 
}

\affil[3]{
Carnegie Mellon University
                }

\begin{document}

\maketitle
 \footnotetext[1]{M.F. is with Univ Gustave Eiffel, Univ Paris Est Creteil, 
 CNRS, LAMA UMR8050 F-77447,
 Marne-la-Vall{\'e}e, France.\\
 email: matthieu.fradelizi@univ-eiffel.fr 
                      
L.G. is with the Department of Pure Mathematics and Mathematical Statistics, University of Cambridge,  UK and was supported by the Engineering and Physical Sciences Research Council [Grant Ref:EP/Y028732/1].\\
email: lg560@cam.ac.uk  

M.R. is with the 
Department of Mathematical Sciences, Carnegie Mellon University, Pittsburgh, 15213, PA, United States. \\
email: mrapapor@andrew.cmu.edu }
 
\begin{abstract} 
The classical Gr{\"u}nbaum inequality asserts that the proportion of the volume of a convex body cut off by a halfspace containing its barycenter is at least $1/e$. From its functional counterpart, for any log-concave random variable $X$, one has $\mathbb{P}(X\ge \mathbb{E}X)\ge 1/e$, with equality if and only if $X$ is exponential. 
Motivated by Gr{\"u}nbaum's inequality for convex bodies and its functional generalizations, we prove analogous inequalities for entropy, with characterizations of the equality cases. We show that if $X$ is a log-concave random variable on $\mathbb{R}$, then
$$
        h(X)-\frac{e}{e-1}H_2(1/e) \leq h(X|X \leq \mathbb{E}X) \leq h(X),
$$
where $h$ is the differential entropy, $H_2(\cdot)$ is the binary entropy function and $X|X\leq \mathbb{E}X$ stands for the distribution of $X$ conditional on $X\leq \mathbb{E}X$. We generalize the upper bound for all R{\'e}nyi entropies and the lower bound for {\em min}--entropy. Our inequalities are sharp and we characterize all equality cases. 

We discuss potential generalizations in high dimensions and give counterexamples in some directions. 

As an intermediate step for the proof of the lower bound, we establish a new inequality that we prove using a technique known as \textit{degrees of freedom}, combined with a standard KKT-type optimization lemma. Along the way, we characterize the equality case in a known comparison inequality between differential and {\em min}--entropy, which may be of independent interest.  
\end{abstract}

\section{ Introduction and main results}

Let $X \in \RL^n$ be a log-concave random vector. Let $H$ be an $(n-1)$-dimensional hyperplane through the mean of $X$, and denote by $H^-$ and $H^+$ the two half-spaces in $\mathbb{R}^n$ whose boundary is $H$. The functional version of Gr{\"u}nbaum's inequality \cite{grunbaum} states that
\begin{equation} \label{grunbaumeq}
    \frac{1}{e} \leq \mathbb{P}(X\in H^+),\hspace{0.2cm} \mathbb{P}(X\in H^-) \leq 1-\frac{1}{e}.
\end{equation}

In view of the analogy between entropy and volume \cite{costacover, dembocoverthomas, FMMZ}, the growing role of entropy methods in convex geometry \cite{ bobkovmadiman, bourgains,klartagstrong}, as well as the recently established entropic analogues of Bergstr{\"o}m's and Bonnesen's inequalities \cite{bergstromisit}, it is natural to ask whether Gr{\"u}nbaum's inequality admits an entropic counterpart. More precisely, we address the question of how the entropy of a log-concave distribution behaves when restricting to a half-space.

Let $X$ be a random vector with log-concave density $f$. For a hyperplane $H$, we write
\[
    X^+:=(X\mid X\in H^+)
    \quad \text{ and } \quad
    X^-:=(X\mid X\in H^-).
\]
Thus, for instance, $X^+$ has density
\[
    f^+(x)
    :=
    \frac{f(x)\mathbf 1_{H^+}(x)}
         {\mathbb P(X\in H^+)}.
\]
The differential entropy of $X$ is 
$$
h(X) = -\int_{\mathbb R^n}f(x) \log f(x) dx.
$$
We will abuse the notation and write $h(f) = h(X)$ when it is convenient. Denoting $p^+ := \mathbb{P}(X\in H^+)$ and $p^- := 1-p^+,$ a direct computation gives
\begin{equation} \label{elementaryid}
    h(X) = p^+h(X^+) +p^-h(X^-) + H_2(p^+), 
\end{equation}
where $H_2(p) := -p\log{p} - (1-p)\log{(1-p)}, p \in [0,1]$.
From an information theoretic standpoint, this can also be seen from the definition of mutual information (see \eqref{mutualinfodef} in the Appendix), since
\begin{align} \label{plusminuseq}
I(X;\mathbf{1}_{\{X\in H^+\}}) &= h(X)-h(X|\mathbf{1}_{\{X\in H^+\}}) = h(X) - p^+h(X^+) -p^-h(X^-) \\ \label{IH}
&= H(\mathbf{1}_{\{X\in H^+\}}) = H_2(p^+).
\end{align}

We aim to understand the relationship between $h(X^+), h(X^-)$ and $h(X)$. In the one-dimensional setting, we see from \eqref{elementaryid}, that if $H^+=\mathbb{R}_+$ and $f$ is symmetric around $0$, then 
$$
h(X^+) = h(X^-) = h(X)-\log{2}.
$$
On the other hand, for the centered exponential with density $f(x)=e^{-x-1}\mathbf{1}_{[-1,\infty)}(x),$ the memoryless property gives
\begin{equation} \label{exponentialvalue}
h(X^+) = h(X) = 1 = h(X^-) + \frac{e}{e-1}H_2\Bigl(\frac{1}{e}\Bigr),
\end{equation}
since this distribution also extremizes \eqref{grunbaumeq}.

Motivated by these examples, while thinking of the exponential distribution as an extreme point of the set of log-concave measures under moment constraints~(see, e.g., \cite{MNR}), it is natural to conjecture that in dimension $1$
\begin{equation} \label{condstrengthen}
    \max{\{h(X|X\in H^+),h(X|X\in H^-)\}} \leq h(X). 
\end{equation}

Our main contributions are twofold. First, we prove \eqref{condstrengthen} in dimension $1$, as well as the more general statement that
R\'enyi entropy of any order (defined just below) is non-increasing under one-sided truncation of a
one-dimensional log-concave density. Furthermore, we characterize all equality
cases. Second, under a centering assumption, we establish sharp reverse
inequalities for differential entropy and min-entropy, again with
complete equality characterizations.

We recall here that the R\'enyi entropy of order
$\alpha\in[0,\infty]$ of a random variable with density $f$ is
\begin{equation} \label{renyidef}
    h_\alpha(f)
    =
    \frac{1}{1-\alpha}
    \log\left(\int_{\mathbb R}f(x)^\alpha\,dx\right),
\end{equation}
with the usual limiting definitions at $\alpha=1$ and
$\alpha=\infty$. In particular,
\[
    h_1(f)=h(f)
    \quad \text{ and } \quad
    h_\infty(f)=-\log\|f\|_\infty,
\]
where $h_\infty$ is commonly referred to as min-entropy.

Our first main result is



\begin{theorem}
\label{upperThIntro}
Let $X \in \mathbb R$ have a log-concave density $f$. For $m \in \mathbb R$, let
\[
f^{-}_m(x):=\frac{f(x)\mathbf 1_{(-\infty,m]}(x)}{\int_{-\infty}^{m} f(y)\,dy}, \quad x \in \mathbb R,
\]
be the density of $X|X\le m$.
Then, for every $\alpha\in[0,\infty]$, and $m \in \mathbb R$ we have
\begin{equation} \label{alphaineqIntro}
h_\alpha\!\big(f^{-}_m\big)\le h_\alpha(f).
\end{equation}

Moreover, for any $\alpha \in (0,\infty]$, there is equality in \eqref{alphaineqIntro}, if and only if either the support of $X$ is contained in $(-\infty, m]$, or $X$ has an exponential distribution on $(-\infty,M]$ for some $M \geq m$. 
\end{theorem}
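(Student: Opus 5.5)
We reduce the statement to a monotonicity property in the truncation point. Write $F(m)=\int_{-\infty}^m f$ and, for $\alpha\neq 1$,
\[
\phi_\alpha(m):=h_\alpha(f^-_m)=\frac{1}{1-\alpha}\Bigl(\log\int_{-\infty}^m f^\alpha - \alpha\log F(m)\Bigr).
\]
Since $f^-_m\to f$ as $m\to\infty$ and $h_\alpha$ converges along this limit (log-concave tails decay exponentially), it suffices to show that $m\mapsto\phi_\alpha(m)$ is non-decreasing on $\{F>0\}$. The case $\alpha=1$ then follows by a limit, or directly by the same computation. The case $\alpha=\infty$ is elementary: $\|f^-_m\|_\infty=\sup_{x\le m}f/F(m)\ge \|f\|_\infty$ needs a separate but analogous argument, done below. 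The case $\alpha=0$ is trivial, since the support only shrinks.

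Differentiating, for $\alpha\neq1$,
\[
(1-\alpha)\phi_\alpha'(m)=\frac{f(m)^\alpha}{\int_{-\infty}^m f^\alpha}-\alpha\frac{f(m)}{F(m)}.
\]
Monotonicity of $\phi_\alpha$ is therefore equivalent to the sign statement
\[
\frac{1}{1-\alpha}\left(\frac{f(m)^{\alpha-1}F(m)}{\int_{-\infty}^m f^\alpha}-\alpha\right)\ge 0.
\]
Set $g(t)=f(m-t)/f(m)$ for $t\ge0$. This function is log-concave with $g(0)=1$. With $A=\int_0^\infty g$ and $B=\int_0^\infty g^\alpha$, the claim reduces to
\[
\frac{A}{B}\ \ge\ \alpha\quad(\alpha<1), \qquad \frac{A}{B}\le \alpha\quad(\alpha>1).
\]
Equivalently, one needs $\alpha\int_0^\infty g^\alpha$ compared with $\int_0^\infty g$ in the correct direction, for log-concave $g$ with $g(0)=1$. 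This is the key lemma, and I expect it to be the main obstacle, though it is classical. The map $\alpha\mapsto \alpha\int_0^\infty g^\alpha$ is constant, equal to $1/\lambda$, exactly when $g=e^{-\lambda t}$. In general, the function $\Phi(p)=p\int_0^\infty g(t)^{p}\,dt$ is non-increasing in $p>0$ for log-concave $g$ with $\max g\le$ ... Here care is needed because $g(0)=1$ but $g$ need not be maximal at $0$. I would use the substitution $g=e^{-V}$ with $V$ convex and $V(0)=0$, write $\int_0^\infty e^{-pV}=\int_0^\infty |\{V< s/p\}\cap\mathbb R_+|\,e^{-s}ds$ on the region where $V\ge0$, and use that $u\mapsto |\{t\ge0:V(t)<u\}|$ is such that $|\{V<s/p\}|\cdot p$ is monotone in $p$ by convexity. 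Concretely, the sublevel set length $L(u)$ is concave on $u>0$ with $L(0^+)\ge 0$, so $L(s/p)p$ is non-increasing in $p$. The part where $V<0$ near $0$, i.e. $f$ increasing to the left of $m$, must be handled too. I would first try to show that it only helps, since there $g>1$ and $g^\alpha$ compares to $g$ in the favorable direction. Failing that, I would reduce via the layer-cake representation over all $u\in\mathbb R$, using $\int e^{-pV}=p\int_{\mathbb R} L(u)e^{-pu}du$ with $L$ concave on its support and $L(0)>0$.

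For equality: monotonicity gives $\phi_\alpha(m)=\lim\phi_\alpha$ only if $\phi_\alpha'\equiv0$ on $[m,\sup\supp f)$. That forces equality in the key lemma at every such point. Equality holds iff $L$ is linear through the origin, i.e. $g$ is exponential, so $f$ restricted to the left of each such point is exponential with one rate. This yields either $m\ge\sup\supp f$ or $f$ is exponential on $(-\infty,M]$. For $\alpha=\infty$, I would argue directly: $\|f^-_m\|_\infty\ge f(m)/F(m)$ and $\|f\|_\infty$, and log-concavity gives $F(m)\le f(m)/|(\log f)'(m^-)|$. Then I would check equality in the same way.
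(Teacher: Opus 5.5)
Your approach differs from the paper's and can be made to work, but as written the key lemma points the wrong way. Your reduction is correct: you need $A\ge\alpha B$ for $\alpha<1$ and $A\le\alpha B$ for $\alpha>1$. That is, $\Phi(\alpha)\le\Phi(1)$ for $\alpha<1$ and $\Phi(\alpha)\ge\Phi(1)$ for $\alpha>1$, where $\Phi(p)=p\int_0^\infty g^p$. So $\Phi$ must be \emph{non-decreasing}. Your claim that it is non-increasing is false, and it would give the reverse inequality: for $f$ uniform on $[0,1]$ and $m\in(0,1)$, one has $g=\mathbf 1_{[0,m]}$ and $\Phi(p)=pm$.

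The same slip appears in your layer-cake step. Concavity of $L$ with $L(0^+)\ge0$ makes $L(u)/u$ non-increasing in $u$. Hence $pL(s/p)=s\,L(s/p)/(s/p)$ is non-decreasing in $p$, which is the direction you need.

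Once corrected, you need no case analysis on whether $g$ exceeds $1$ (i.e.\ whether $m$ lies right of the mode). Write $g=e^{-V}$ with $V$ convex and $V(0)=0$, and substitute $t=s/p$:
\[
\Phi(p)=\int_0^\infty e^{-pV(s/p)}\,ds,\qquad pV(s/p)=s\cdot\frac{V(s/p)-V(0)}{s/p}.
\]
Chord slopes of a convex function from a fixed point are non-decreasing, so the integrand is pointwise non-decreasing in $p$, whatever the sign of $V$.

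This representation also handles equality. If $\Phi(\alpha)=\Phi(1)$ with $\alpha\neq1$, the chord slopes must agree for a.e.\ $s$, so $V$ is linear on $[0,\infty)$; a bounded support $[0,R]$ gives strict inequality for $s$ between $R$ and $\alpha R$. Thus $g$ is exponential, which is what your equality argument needs at a.e.\ point of $[m,\sup\operatorname{supp} f)$.

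For $\alpha=1$, the limit $\alpha\to1$ gives only the inequality. For equality, use $\phi_1'(m)=\frac{f(m)}{F(m)}\frac{\Phi'(1)}{\Phi(1)}$ with
$\Phi'(1)=\int_0^\infty e^{-V}(1-V)=\lim_{t\to R^-}te^{-V(t)}+\int_0^R(tV'-V)e^{-V}\,dt$.
Both terms are nonnegative, and the sum vanishes only for exponential $g$.

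For $\alpha=\infty$, the bound $F(m)\le f(m)/(\log f)'(m^-)$ does not by itself compare $f(m)/F(m)$ with $\|f\|_\infty$. Use it inside your monotonicity scheme instead: to the left of the mode it gives $\phi_\infty'(m)=\frac{f(m)}{F(m)}-(\log f)'(m)\ge0$. To the right of the mode, $\phi_\infty=\log F-\log\|f\|_\infty$ has derivative $f/F>0$, so equality forces the mode to be the right end of the support.

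Compared with the paper: there, only the Shannon case is proved by monotonicity in the truncation point. The paper applies $\log\|g\|_\infty\le1+\int g\log g$ to $g_x=f\mathbf 1_{(-\infty,x]}/F(x)$ together with $\|g_x\|_\infty\ge f(x)/F(x)$. It then gets all $\alpha\in(0,\infty)$ by showing that $G(\alpha)=\|f\|_\alpha/\|f\mathbf 1_{\mathbb R^-}\|_\alpha$ is non-increasing in $\alpha$, since its log-derivative is $-(h(g_\alpha)-h(g_\alpha^-))/\alpha^2$ with $g_\alpha\propto f^\alpha$. The case $\alpha=\infty$ uses that $f/F$ is non-increasing (Pr\'ekopa), and the equality cases go through the equality characterization of that comparison inequality.

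Your corrected route replaces all of this with one elementary convexity fact applied at each truncation point. It treats every $\alpha$ uniformly and gives monotonicity of $m\mapsto h_\alpha(f^-_m)$ directly. At $\alpha=1$ it needs only the weaker endpoint inequality $\log g(0)\le1+\int g\log g$ for log-concave densities on $\mathbb R_+$, which is your $\Phi'(1)\ge0$. The paper's route, in exchange, isolates the equality case of the $\|g\|_\infty$ inequality, which it presents as being of independent interest.
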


Repeatedly applying the conclusion of Theorem \ref{upperThIntro} shows that, for every $\alpha\in [0,+\infty]$, 
$$
    t \mapsto h_{\alpha}(X|X \leq t ) \text{ is non-decreasing in }t,
$$
or equivalently 
\begin{equation} \label{residualfunc}
    t \mapsto h_{\alpha}(X|X \geq t ) \text{ is non-increasing in }t.
\end{equation} 

The differential entropy case is the key step in the proof of
Theorem~\ref{upperThIntro}. The general R\'enyi inequality is deduced
from it by applying the differential entropy result to suitable normalized powers of the density. The equality analysis relies on a
rigidity result for the comparison inequality
$$
\log\|g\|_\infty \le 1+\int_{\mathbb R} g\log g,
$$
which was established in \cite[Proof of Theorem 7]{fradelizi2008increasing} and is an important ingredient in the proof of the inequality for differential entropy, Theorem \ref{entropygrunbaumTh}. We characterize all
equality cases in this inequality in
Proposition~\ref{prop:fradelizi-equality-1d}. As another consequence of
the min-entropy case of Theorem~\ref{upperThIntro} and Gr\"unbaum's
inequality, one recovers the known estimate
\cite[Theorem 4]{fradelizisections}, 
\[
    \|f\|_\infty\leq e f(0)
\]
for every centered log-concave probability density on $\mathbb R$.

In Section \ref{grunbaumineqsec} we break Theorem \ref{upperThIntro} into 
Proposition \ref{min-entroy}, 
Theorem \ref{entropygrunbaumTh},
Theorem \ref{thm:renyi-truncation}
and prove  
the equality case characterizations 
in 
Proposition 
\ref{cor:renyi-truncation-equality} using Proposition \ref{prop:fradelizi-equality-1d}. 


Unlike the upper bound in Theorem~1, a reverse inequality requires the truncation point to be centered at the mean. Combining the $\alpha =1$ case of Theorem \ref{upperThIntro} (applied to $X^+$) with the identity \eqref{elementaryid} and Gr\"unbaum's
inequality gives the non-sharp reverse estimate
\[
        h(X)\le h(X^-) + e\,H_2(1/e).
\]
Our second main result identifies the sharp constant, both for
differential entropy and for min-entropy:

\begin{theorem}
\label{secondThIntro}
Let \(X\) be a random variable on $\mathbb R$ with zero mean and log-concave density \(f\), so that
\[
        \int_{\mathbb R} xf(x)\,dx=0.
\]
Let $\alpha \in \{1,\infty\}$ and $
f^{-}(x):=\frac{f(x)\mathbf 1_{(-\infty,0]}(x)}{\int_{-\infty}^{0} f(y)\,dy},  x \in \mathbb R
$.
Then
\begin{equation} \label{grunbaumineqintro}
        h_{\alpha}(f^-)
        \ge
        h_{\alpha}(f)-C_{\alpha},
\end{equation}
where 
$$
C_1 = \frac{e}{e-1}H_2(1/e)=\frac{e}{e-1}-\log(e-1) \quad \text{ and }  \quad C_{\infty} = \log(1+\sqrt{2}).
$$
For $\alpha = 1$, there is equality in \eqref{grunbaumineqintro} if and only if $X$ follows a centered exponential distribution on $[\frac{1}{\lambda}, \infty)$ for some $\lambda < 0$.

For $\alpha = \infty,$ there is equality in \eqref{grunbaumineqintro} if and only if $f$ is constant on a finite interval $[0,a], a \geq 0$ and exponential on $(-\infty, 0]$, or equivalently, if and only if $X^+$ is uniformly distributed on an interval and $X^-$ is exponentially distributed.
\end{theorem}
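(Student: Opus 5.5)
The plan is to treat $\alpha=\infty$ and $\alpha=1$ separately. In both cases the problem becomes an extremal problem over centered log-concave densities, which I would solve by the degrees-of-freedom method combined with the KKT-type optimization lemma announced in the abstract.

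\emph{The case $\alpha=\infty$.} Write $p^-=\int_{-\infty}^0 f$ and $M^-=\sup_{x\le 0}f(x)$. Then $h_\infty(f^-)=\log p^- -\log M^-$, and \eqref{grunbaumineqintro} is equivalent to
\[
\frac{p^-\,\|f\|_\infty}{M^-}\ \ge\ \frac{1}{1+\sqrt2}.
\]
If $\|f\|_\infty$ is attained on $(-\infty,0]$, the ratio equals $p^-$. Gr\"unbaum's inequality only gives $p^-\ge 1/e<1/(1+\sqrt2)$, which is not enough, so the quantity must be optimized as a whole. The homogeneity is
\[
f\mapsto \lambda f(\lambda\,\cdot),
\]
which preserves the centering; under it the ratio above is invariant. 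We may therefore normalize $f(0)=1$ and fix the values $\|f\|_\infty$ and $M^-$. The constraint set (log-concave, total mass $1$, mean $0$) is handled by degrees of freedom. A perturbation argument shows that an extremal $\log f$ has few affine pieces on each side of $0$: otherwise one can move mass so as to preserve the two linear constraints (mass and first moment) while decreasing $p^-$ relative to the sup-norm ratio. This should force $f$ to be exponential on $(-\infty,0]$ and constant on some $[0,a]$, possibly followed by an exponential piece, which one then shows is absent.

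For the plateau-plus-exponential family, the computation is explicit. Take $f=c\,e^{\lambda x}$ on $x\le0$ and $f=c$ on $[0,a]$. Centering gives $-c/\lambda^2+ca^2/2=0$, i.e. $a=\sqrt2/\lambda$, and then
\[
p^-=\frac{1/\lambda}{1/\lambda+\sqrt2/\lambda}=\frac{1}{1+\sqrt2}.
\]
This yields both the constant and the claimed equality case. Rigidity follows by tracking when each reduction step is strict.

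\emph{The case $\alpha=1$.} I would start from \eqref{elementaryid}, which rewrites the target as
\[
p^+\bigl(h(f^+)-h(f^-)\bigr)+H_2(p^+)\ \le\ C_1 .
\]
The naive route — Theorem~\ref{upperThIntro} applied to the reflected $X^+$, giving $h(f^+)\le h(f)$, plus Gr\"unbaum — loses the sharp constant. Instead I would keep $p^+$ as a parameter and prove a new two-sided inequality controlling $h(f^+)$ and $h(f^-)$ jointly under the mean-zero constraint. The two halves are linked only through $p^\pm$ and the balance of first moments $\int_0^\infty xf=-\int_{-\infty}^0 xf$.

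On each half I would use the comparison inequality $\log\|g\|_\infty\le 1+\int g\log g$ (with its equality characterization from Proposition~\ref{prop:fradelizi-equality-1d}). This bounds $h(f^+)$ from above by $1-\log f(0)+\log p^+$-type quantities, with equality for exponentials. The lower bound on $h(f^-)$ should come from a degrees-of-freedom reduction: among log-concave $f$ on $(-\infty,0]$ with prescribed mass, prescribed first moment and prescribed value $f(0)$, entropy is minimized by a log-affine (truncated exponential) profile. The problem thus reduces to a finite-dimensional optimization in the rate parameters and $p^+$, solved with the KKT lemma. The optimum should be the exponential of \eqref{exponentialvalue} reflected, namely supported on $(-\infty,1/|\lambda|]$, where $p^+=1/e$ and both steps are tight.

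\emph{Main obstacle.} I expect the hardest step to be the degrees-of-freedom reduction for $\alpha=1$. Entropy is concave rather than linear in $f$, so extreme-point arguments give minimizers of $-h$ only after the constraints are linearized. I would first try to fix $f(0)$ and the two half-moments, show that the relevant functional is monotone along the admissible perturbations, and then verify that the extremal configurations are exactly those in the stated equality case.
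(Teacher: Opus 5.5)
\textbf{The case $\alpha=1$: a missing constraint.} Your starting identity $h(f)-h(f^-)=H_2(p^+)+p^+\bigl(h(f^+)-h(f^-)\bigr)$ is the one the paper uses, in mirrored form. The bounds you plan to insert, however, cannot produce $C_1$. They only see the couplings you list between the two halves: $p^\pm$, the balance of first moments, and the common value $f(0)$.

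Any lower bound on $h(f^-)$ under prescribed mass, mean $m^-$ and value $f(0)$ is at most the entropy of the truncated exponential with these data. Combined with $h(f^+)\le 1-\log f(0)+\log p^+$, the best your chain can give is
\[
h(f)-h(f^-)\le H_2(p^+)+p^+\Bigl(1+\log\frac{p^+}{p^-}+\kappa(t)\Bigr),\qquad \kappa(t)=\frac{e^t(t-1)+1}{e^t-1}.
\]
Here $t$ is the shape parameter of that truncated exponential. It is fixed by the scale-invariant quantity $s=f(0)m^-/p^-=\frac{e^t(t-1)+1}{(e^t-1)^2}$, and $\kappa(t)$ is its outward log-slope at $0$ after rescaling to mean $1$.

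At $p^+=1/e$ this bound equals $C_1$ only for $t=1$, and $\kappa(t)\to\infty$ as $s\to0$. Nothing in your couplings bounds $s$ from below. Even adding Lemma~\ref{lem:moment-lower-decreasing} on $[0,\infty)$ gives at best $s\ge (p^+/p^-)^2/2$, which at $p^+=1/e$ still admits $t\approx 2.3$ and a value of about $C_1+0.37$.

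The missing idea is the coupling through log-concavity across the origin. The tangent to $\log f$ at $0$, taken from the truncated side, dominates $\log f$ on the other side. Together with centering this gives (Lemma~\ref{lemmasatisfiescond}, mirrored) $\kappa_0^2\,m^-\le f^-(0)$, where $\kappa_0$ is the outward log-slope of $f^-$ at $0$. For truncated exponentials this is exactly $t\le 1$. This constraint is the heart of Proposition~\ref{prop:normalized-variational-inequality}, whose minimizer is the $t=1$ truncated exponential, where the constraint is active.

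The paper also bounds the untruncated half by maximum entropy at fixed mean, $h\le 1+\log m$, rather than by the sup-norm comparison inequality. Both bounds then involve only the half-means, and $p^+m^+=p^-m^-$ reduces everything to a one-variable maximization peaking at $p^+=1/e$. The equality case then comes from that of Gr\"unbaum's inequality. Your $f(0)$-based bound instead forces you to determine the joint admissible range of $(p^+,s)$, which you do not address.

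You also have the extremizer mirrored. For $f^-$ it is the exponential supported on $[1/\lambda,\infty)$ with $\lambda<0$, which is the one with $p^+=1/e$. The exponential supported on a left half-line gives $h(f^-)=h(f)$.

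\textbf{The case $\alpha=\infty$: unproved reduction.} Your reformulation and the plateau-plus-exponential computation are correct. The decisive step, ``this should force $f$ to be exponential on $(-\infty,0]$ and constant on $[0,a]$'', is only asserted, however. Moreover, with a linear objective such as $p^-$, a degrees-of-freedom argument does not by itself yield rigidity.

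None of this is needed. Since $p^-\|f\|_\infty/M^-\ge p^-$ always, you are done whenever $p^+\le\sqrt2\,p^-$.
\begin{enumerate}
\item If $\|f\|_\infty$ is attained on $(-\infty,0]$, then $\sup_{[0,\infty)}f=f(0)$. Lemma~\ref{lem:moment-lower-decreasing} on $[0,\infty)$, Borell's Lemma~\ref{lem:moment-upper-logconcave} applied to $f(-\cdot)$, and centering give $(p^+)^2/(2f(0))\le (p^-)^2/f(0)$, i.e.\ $p^+\le\sqrt2\,p^-$.
\item If the maximum is attained only on $(0,\infty)$, then $M^-=f(0)$. The same two lemmas give $\|f\|_\infty/f(0)\ge (p^+)^2/(2(p^-)^2)$. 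So for $p^+>\sqrt2\,p^-$ the ratio is at least $(p^+)^2/(2p^-)>1/(1+\sqrt2)$.
\end{enumerate}
The equality characterization then follows directly from the equality cases of the two lemmas.
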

The extremizers in Theorem~\ref{secondThIntro} depend on the R\'enyi entropy
order. We remark first that the extremizer in \eqref{grunbaumineqintro} for $\alpha=1$ is the mirrored exponential to the one that extremizes \eqref{alphaineqIntro} in Theorem \ref{upperThIntro}, and  for $\alpha=+\infty$, the extremizer is instead
piecewise log-affine: it is exponential on one side of the origin and
constant on the other.

In Section \ref{grunbaumSec}, we break Theorem \ref{secondThIntro} into
Theorem \ref{minentropyreverseTh} for {\em min}--entropy and
Theorem \ref{thm:sharp-reverse-grunbaum-shannon} for differential entropy. 

The proof of the differential entropy inequality relies on a new
constrained entropy minimization result for log-concave densities on
$\mathbb R_+$. More precisely, an important ingredient is
Proposition~\ref{prop:normalized-variational-inequality}. This is reminiscent of the result of Melbourne, Nayar,
and Roberto \cite{MNR}, who identified one-sided exponentials as entropy
minimizers among log-concave random variables with fixed variance. Using the
\textit{degrees of freedom} method of \cite{fradeliziguedon}, we first reduce the
problem to densities that are log-affine on at most three intervals.
We then apply the Fritz--John stationarity conditions of
Lemma~\ref{FJ} to show that any minimizer must in fact be log-affine on
a single interval, for which the desired inequality can be verified
directly.


\medskip

{\bf Higher-dimensional extensions.} One might hope that Theorems \ref{upperThIntro} and \ref{secondThIntro} extend to higher dimensions. In particular, it is tempting to conjecture that \eqref{alphaineqIntro} and \eqref{grunbaumineqintro} remain true in any dimension. There are, however, two distinct obstructions.

First, Example \ref{counterexample} at the end of Section \ref{grunbaumineqsec}, shows that, already for min-entropy in dimension $2$, \eqref{alphaineqIntro} can fail if one does not require the boundary of the truncation to pass through the mean. 

Second, even under this additional assumption, the following CLT argument shows that \eqref{alphaineqIntro} cannot hold without an additive dimensional constant if the dimension is large enough. Indeed, let $X=(X_1,\ldots,X_n)$ where $X_1,\ldots ,X_n$   are independent centered exponential random variables, i.e., each $X_i$ has density
$
f_i(x)=e^{-(x+1)}\mathbf 1_{\{x\ge -1\}}(x).
$
Then \(X\) is centered and log-concave, and
\[
h(X)=n.
\]
Let $H^-=\{x=(x_1,\dots,x_n)\in\mathbb{R}^n; \sum_{i=1}^nx_i\le0\}$ and $X^-=X|X\in H^-.$
The density of \(X^-\)  is
\[
f_{X^-}(x)
=
\frac{e^{-\sum_{i=1}^n(x_i+1)}}
{\mathbb P(S_n\le 0)}
\mathbf 1_{\{x_i\ge -1,\ \sum_{i=1}^n{x_i}\le 0\}}.
\]
Therefore, letting $S_n:=\sum_{i=1}^n X_i$, we have
\[
h(X^-)
= -\int_{\mathbb R^n}f_{X^-}(x)\log{f_{X^-}(x)} \ dx =  
n+\mathbb E[S_n\mid S_n\le 0]
+
\log\mathbb P(S_n\le 0).
\]
By the CLT, as $n \to \infty,$
$
\mathbb P(S_n\le 0)\to \frac12,
$ 
and  
\[
\mathbb E[S_n\mid S_n\le0]
=-\sqrt{\frac{2n}{\pi}}+o(\sqrt n).
\]
Thus, 
\[
h(X^-)
=
n-\sqrt{\frac{2n}{\pi}}+o(\sqrt n)=h(X)-\sqrt{\frac{2n}{\pi}}+o(\sqrt n).
\]
Introducing $H^+=\{ x=(x_1,\dots,x_n)\in\mathbb{R}^n;\sum_{i=1}^nx_i\ge0\}$, $X^+=X|X\in H^+$, $p^{-}=\mathbb P(S_n\le 0)$ and $p^{+}=\mathbb P(S_n\ge 0)$. From \eqref{elementaryid}, we get
\[
h(X^+)=\frac{1}{p^+}\left(h(X)-p^{-}h(X^-)-H_2(p^+)\right)=h(X)+\sqrt{\frac{2n}{\pi}}+o(\sqrt n).
\]
Thus $h(X^+)-h(X)=\sqrt{\frac{2n}{\pi}}+o(\sqrt n)>0$ when $n$ is sufficiently large. Thus, even under the additional assumption that the truncating hyperplane contains the mean, there cannot be a direct dimensional extension of \eqref{alphaineqIntro} for $\alpha =1$.
This leads to the following question.
\begin{question}
    What are the best constants $c_n, C_n>0$ such that for every log-concave random vector $X$ in $\mathbb R^n$ and for hyperplane $H$ containing $\mathbb E X$, one has
    $$
    h(X)-c_n\le h(X|X\in H^+) \leq h(X) + C_n ?
    $$
\end{question}
\noindent
We believe that the extremizers of the above question should be product exponential density as above. 
This leads to the following question, which we believe is of independent interest: 
\begin{question} \label{qtheta}
    Let $X = (X_1,\ldots,X_n)$ where $X_i$ are i.i.d. centered exponentials as above. Let $\theta \in \mathbb{S}^{n-1}$. Is it true that 
    $$
    \theta \mapsto h(X|\langle X,\theta \rangle \leq 0) 
    $$
    is minimized at the diagonal $\theta_1 := (1/\sqrt{n},\ldots,1/\sqrt{n})$ and maximized at the opposite diagonal $-\theta_1$?
\end{question}
\noindent
Question \ref{qtheta} is related in spirit to the treatment \cite{brazitikosexp}, as well as the results in \cite{eskenazisg,ENT} on the entropy of Gaussian mixtures. However, here we aim to extremize with respect to the conditioning event.   

The  argument above shows that any dimensional extension of Theorem \ref{secondThIntro} with constants $C_{\alpha,n}$ would have to satisfy $C_{1,n} = \Omega(\sqrt{n})$.
By letting $X$ be uniform on a centered convex body $K$, any such generalization of \eqref{grunbaumineqintro} would give a Gr{\"u}nbaum inequality of the form
$$
\mathrm{Vol(K^-)} \geq \frac{1}{e^{C_{\alpha,n}}}\mathrm{Vol}(K). 
$$
This estimate is suboptimal even in dimension $1$ (with the multiplicative constant $e^{-C_{\infty}}$) when compared to the optimal $\frac{1}{2}$ for an interval. Nevertheless, the constants in Theorem \ref{secondThIntro} are sharp, and we refer to \eqref{grunbaumineqintro} as an entropy analogue of Gr{\"u}nbaum's inequality (in dimension $1$). 
We do not know whether it is possible to recover the constant $\frac{1}{2}$ for intervals in dimension $1$ by a more clever application of the entropic estimate.


\medskip
\textbf{Other connections and applications.} Combining Gr{\"u}nbaum's inequality \eqref{grunbaumeq} with the elementary property \eqref{IH}, we see that for a log-concave $X$, 
$$
I(X;\mathbf{1}_{\{X\in H^+\}}) \geq H_2\Bigl(\frac{1}{e}\Bigr).
$$
In a similar spirit, \eqref{condstrengthen} in $d=1$  strengthens the standard conditioning-reduces-entropy inequality (see \eqref{conditionreduceentropy} in the Appendix). 

Another way to state \eqref{grunbaumeq} from a statistical point of view is that trimming away a set of extreme values of probability at most $\frac{1}{e}$ will still keep the mean in the retained region, assuming the data distribution is log-concave. From this standpoint, \eqref{residualfunc} says that, in dimension one, assuming the data distribution is log-concave, removing a half-space of values never increases uncertainty.

Let us remark that the function in \eqref{residualfunc} has been studied in the context of reliability engineering \cite{residual}. There, distributions with the property \eqref{residualfunc} are said to have decreasing uncertainty of residual life. Hence, our results imply that log-concave distributions in one dimension have decreasing uncertainty of residual life. The same function has been used as a basis for goodness-of-fit test for the exponential distribution~\cite{goodness}. See also \cite{hazard, piraeus, testinglifetime, chernoffresidual, propertiesofShannon, raocumulative} for this line of work.


\subsection*{Statement on the use of A.I.}

The argument in the proof of Proposition \ref{prop:normalized-variational-inequality} and in particular the idea to use the stationarity conditions of Lemma \ref{FJ} to conclude that every local minimizer is one--piece log-affine was given by ChatGPT 5.4 and 5.5 after several prompts. The final proof was verified and written by the authors. 

\section{R{\'e}nyi entropies do not increase under truncation} \label{grunbaumineqsec}

In this section, we prove Theorem~\ref{thm:renyi-truncation}.
We first establish the differential entropy case in Section \ref{diffentropysubsec}, from which the general
Rényi inequality follows. We prove the general inequality and then characterize the equality cases in Section \ref{renyisubsec}. We
conclude with a two-dimensional example illustrating the necessity
of the centering assumption in possible higher-dimensional extensions in Section \ref{counterexamplesec}.

\subsection{The differential entropy case} \label{diffentropysubsec}

\begin{lemma}
Let $f:\mathbb{R}\to[0,\infty)$ be a log-concave probability density. 
Then the function
\[
x\longmapsto -\frac{\int_{-\infty}^x f(y)\log f(y)\,dy}{\int_{-\infty}^x f(y)\,dy}+\log \Biggl(\int_{-\infty}^x f(y)\,dy\Biggr)
\]
is non-decreasing on $\mathbb{R}$.
\end{lemma}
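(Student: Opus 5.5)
The expression in the statement is exactly $h(f^-_x)$, where $f^-_x$ is the density of $X\mid X\le x$. Indeed, write $F(x)=\int_{-\infty}^x f$ and $G(x)=\int_{-\infty}^x f\log f$. Then
\[
-\int f^-_x\log f^-_x=-\frac{G(x)}{F(x)}+\log F(x)=:\varphi(x).
\]
So the plan is to differentiate $\varphi$ in $x$ and show that the sign of the derivative is controlled by the comparison inequality
$\log\|g\|_\infty\le 1+\int_{\mathbb R} g\log g$ for log-concave densities $g$, from \cite[Proof of Theorem 7]{fradelizi2008increasing}. We apply it to $g=f^-_x$.

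\textbf{Setup.} Let $(a,b)$ be the interior of the support of $f$, with $-\infty\le a<b\le\infty$. Then $\varphi$ is only defined where $F>0$, i.e. for $x>a$. For $x\ge b$ we have $f^-_x=f$, so $\varphi$ is constant there. It remains to treat $(a,b)$. On this interval $f$ is continuous and positive, $f\log f$ is locally bounded, and so $F$ and $G$ are $C^1$ with $F'=f$ and $G'=f\log f$. Continuity of $\varphi$ at $b$, when $b<\infty$, follows from integrability of $f\log f$ for log-concave $f$ (which has exponential tails and is bounded).

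\textbf{Derivative computation.} A direct computation gives
\[
\varphi'(x)=\frac{f(x)}{F(x)}\Bigl(1-\log f(x)+\frac{G(x)}{F(x)}\Bigr).
\]
Since $f(x)/F(x)>0$ on $(a,b)$, it suffices to show $\log f(x)-\log F(x)\le 1+\frac{G(x)}{F(x)}-\log F(x)$. In terms of $g=f^-_x$ this reads
\[
\log g(x)\le 1+\int g\log g .
\]

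\textbf{Applying the comparison inequality.} The density $g=f^-_x$ is log-concave, being the product of $f$ with the indicator of a half-line, normalized. Hence $\log g(x)\le\log\|g\|_\infty\le 1+\int g\log g$ by the cited inequality. Therefore $\varphi'\ge 0$ on $(a,b)$, and together with continuity and constancy beyond $b$, $\varphi$ is non-decreasing wherever it is defined.

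\textbf{Main obstacle.} The only genuine input is the comparison inequality, which we take from the earlier reference. The remaining care is in the regularity at the endpoints of the support: $f$ may jump at $a$ or $b$, and near $a$ the ratio $f/F$ may blow up. Since monotonicity is only needed on open subintervals where $\varphi$ is $C^1$, plus continuity at $b$, this causes no difficulty.
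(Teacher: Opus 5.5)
Your proof is correct and is essentially the paper's argument: the paper also applies the comparison inequality $\log\|g_x\|_\infty\le 1+\int g_x\log g_x$ to the truncated density $g_x=f\mathbf 1_{(-\infty,x]}/F(x)$, bounds $\|g_x\|_\infty\ge f(x)/F(x)$, and reads off $\bigl(\psi+\log F\bigr)'\ge 0$ from the same derivative formula. Your handling of the support endpoints (restricting to $x>a$ where $F>0$, and using constancy beyond $b$ together with continuity at $b$) is a refinement that the paper leaves implicit.
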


\begin{proof}
Fix $x\in\mathbb{R}$ and define 
\[
F(x):=\int_{-\infty}^x f(y)\,dy,
\qquad
\Phi(x):=-\int_{-\infty}^x f(y)\log f(y)\,dy \quad \text{ and } \quad \psi(x):=\frac{\Phi(x)}{F(x)}.
\]
Consider also the probability density
\[
g_x(y):=\frac{f(y)}{F(x)}\mathbf{1}_{\{y\le x\}}.
\]
Since $f$ is log-concave, so is $g_x$. By the entropy bound for log-concave densities~\cite{fradelizi2008increasing}
\begin{equation} \label{matthieusineq}
\log\big(\|g_x\|_\infty\big)\le 1+\int g_x(y)\log g_x(y)\,dy.
\end{equation}
Noting that $\|g_x\|_\infty\ge g_x(x)=f(x)/F(x)$ and that
\[
\int g_x(y)\log g_x(y)\,dy
=
\frac{1}{F(x)}\int_{-\infty}^x f(y)\log f(y)\,dy-\log F(x),
\]
we obtain
\[
\log f(x)\le 1+\frac{1}{F(x)}\int_{-\infty}^x f(y)\log f(y)\,dy.
\]
A direct computation shows that
\[
\psi'(x)
=
\frac{f(x)}{F(x)}
\left(
-\log f(x)
+
\frac{1}{F(x)}\int_{-\infty}^x f(y)\log f(y)\,dy
\right),
\]
and the previous estimate yields
\[
\psi'(x)\ge -\frac{f(x)}{F(x)}.
\]
Therefore,
\[
(\psi+\log F)'(x)
=
\psi'(x)+\frac{f(x)}{F(x)}
\ge 0,
\]
which ends the proof.
\end{proof}





\begin{theorem}\label{entropygrunbaumTh}
Let $f$ be a log-concave probability density and for $m \in \RL$, let 
\[
f_{m}^-(x):=\frac{f(x)\mathbf 1_{(-\infty,m]}(x)}{\int_{-\infty}^{m} f(y)\,dy}.
\]
Then
\[
h(f_{m}^-)\le h(f).
\]
\end{theorem}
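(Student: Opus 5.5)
The plan is to read Theorem~\ref{entropygrunbaumTh} directly off the preceding lemma. Writing $F(x)=\int_{-\infty}^x f$ as in that proof, the function
\[
G(x):=-\frac{\int_{-\infty}^x f\log f}{F(x)}+\log F(x)
\]
is exactly $h(f_x^-)$. Indeed, $f_x^-=f\mathbf 1_{(-\infty,x]}/F(x)$, so
\[
h(f_x^-)=-\frac{1}{F(x)}\int_{-\infty}^x f\log\frac{f}{F(x)}=-\frac{\int_{-\infty}^x f\log f}{F(x)}+\log F(x).
\]
The lemma says $G$ is non-decreasing. Hence for every $m$ and every $M\ge m$ we have $h(f_m^-)\le h(f_M^-)$. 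It remains to let $M\to\infty$, or to stop at the right endpoint of the support.

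\textbf{The limit.} Let $b=\sup\supp f\in(-\infty,\infty]$. If $m\ge b$ then $f_m^-=f$ and there is nothing to prove. Otherwise, take $M\uparrow b$. Then $F(M)\to 1$, so $\log F(M)\to 0$. Also $\int_{-\infty}^M f\log f\to\int_{\mathbb R} f\log f$. This holds by dominated convergence, because $f\log f$ is integrable for a log-concave density: $f$ is bounded, so $(f\log f)^+$ is integrable, and $f$ has exponential tails, so $f|\log f|\le C f(1+|x|)$ on the support away from where $f$ is small, with finite first moment. Hence $G(M)\to h(f)$, and monotonicity gives $h(f_m^-)\le h(f)$.

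\textbf{Technical points.} Two things need care, and I expect this to be the only (minor) obstacle.
\begin{enumerate}
\item $G$ is only defined where $F>0$, i.e.\ on $(a,\infty)$ with $a=\inf\supp f$. The lemma's monotonicity should be read on that interval. If $m\le a$, the truncation is degenerate and the statement is vacuous.
\item The lemma's derivative computation uses continuity of $f$ at $x$. This holds on the interior of the support, where a log-concave $f$ is continuous. So $G$ is absolutely continuous there with $G'\ge0$ almost everywhere, and monotonicity follows.
\end{enumerate}
Continuity of $G$ at $b$ from the left is exactly the limit argument above.
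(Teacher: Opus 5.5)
Your proposal is correct and follows the paper's proof: you identify $\psi(m)+\log F(m)$ (your $G(m)$) with $h(f_m^-)$, use the monotonicity lemma, and pass to the limit at the right end of the support. Your remarks on restricting to $\{F>0\}$ and on continuity of $f$ in the interior of the support are sound refinements that the paper leaves implicit. One correction to your integrability justification: the bound $f|\log f|\le Cf(1+|x|)$ fails in general, for instance for Gaussian tails, where $-\log f$ grows quadratically. Instead use $f|\log f|\le \frac{2}{e}\sqrt f$ wherever $f\le 1$; since $\sqrt f$ is integrable by the exponential tail bound, dominated convergence still applies.
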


\begin{proof}  For \(x\in\mathbb R\), we define as before
\[
F(x):=\int_{-\infty}^x f(y)\,dy,\quad
\Phi(x):=-\int_{-\infty}^x f(y)\log f(y)\,dy \quad \text{ and } 
\psi(x):=\frac{\Phi(x)}{F(x)}.
\]
By the previous Lemma, the function
\[
x\mapsto \psi(x)+\log F(x)
\]
is non-decreasing. Denote $p_m^{-}:=\int_{-\infty}^{m} f(x)\,dx$. By definition, 
\[
\psi(m)+\log F(m)
=
-\frac1{p_m^{-}}\int_{-\infty}^m f(x)\log f(x)\,dx+\log p_m^{-}
=
h(f_{m}^-).
\]
On the other hand, since \(F(x)\to1\) as \(x\to\infty\), we have
\[
\lim_{x\to\infty}\big(\psi(x)+\log F(x)\big)
=
-\int_{\mathbb R} f(x)\log f(x)\,dx
=
h(f).
\]
Therefore,
\[
h(f_{m}^-)\le h(f).
\]

\end{proof}


\subsection{Rényi entropies and equality cases} \label{renyisubsec}

Since $\supp(f_{m}^-)\subset \supp(f)$, it is immediate that $h_{0}(f_{m}^-)\leq h_{0}(f)$. We first treat the min-entropy case.
\begin{proposition}
\label{min-entroy}
Let $f$ be a log-concave probability density and for $m \in \RL$, let 
\[
f_{m}^-(x):=\frac{f(x)\mathbf 1_{(-\infty,m]}(x)}{\int_{-\infty}^{m} f(y)\,dy}.
\]
Then
\[
h_{\infty}(f_m^-)\le h_{\infty}(f).
\]
Equivalently,
\[
\|f_m^-\|_\infty \ge \|f\|_\infty .
\]
\end{proposition}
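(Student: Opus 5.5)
Let $p:=\int_{-\infty}^m f$ and $M:=\|f\|_\infty$. If $p=0$ the truncated density is undefined, so assume $0<p\le 1$. The claim is that
\[
\sup_{x\le m} f(x)\ \ge\ p\,M .
\]
This is a direct one-dimensional argument using unimodality of log-concave densities. I split according to where the maximum of $f$ is attained.

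\textbf{Case 1.} Suppose $f$ attains (or approaches) its supremum at some point $x_0\le m$. Then $\sup_{x\le m}f(x)=M\ge pM$, and we are done.

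\textbf{Case 2.} Otherwise every mode lies to the right of $m$. Since a log-concave function is unimodal, $f$ is non-decreasing on $(-\infty,m]$, so $\sup_{x\le m} f(x)=f(m)$ (taking the left limit, which is harmless up to null sets). We must show $f(m)\ge p\,M$, that is,
\[
F(m)\le f(m)/M .
\]
Write $f=e^{-V}$ with $V$ convex. Let $x_0>m$ be a mode, so $V(x_0)=-\log M$. By convexity, $V$ lies above its chord on $[m,x_0]$ and, on $(-\infty,m]$, above the extension of that chord. Equivalently, for $y\le m$,
\[
f(y)\le f(m)\,e^{\lambda(y-m)}, \qquad \lambda:=\frac{\log M-\log f(m)}{x_0-m}\ge 0 .
\]
Integrating gives $F(m)\le f(m)/\lambda$. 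This alone does not close the argument, since $\lambda$ could be small.

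The cleaner route is to use the mass on $[m,x_0]$ as well. Set $t:=\log(M/f(m))\ge 0$ and $\ell:=x_0-m$. On $[m,x_0]$ we have $V\le$ chord, which gives the lower bound
\[
\int_m^{x_0} f\ \ge\ f(m)\,\ell\,\frac{e^{t}-1}{t}.
\]
On $(-\infty,m]$ we have the upper bound $F(m)\le f(m)\,\ell/t$. Since $F(m)+\int_m^{x_0}f\le 1$, it follows that
\[
F(m)\le \frac{1}{e^t},
\]
up to checking the ratio: we have $F(m)\big/\bigl(F(m)+\int_m^{x_0} f\bigr)\le \frac{1/t}{1/t+(e^t-1)/t}=e^{-t}$. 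Therefore
\[
F(m)\le e^{-t}=\frac{f(m)}{M},
\]
which is exactly the desired inequality.

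The main obstacle is the bookkeeping in Case 2: combining the chord bound on $[m,x_0]$ (a lower bound on mass) with the extended-chord bound on $(-\infty,m]$ (an upper bound on mass). The key observation is that the ratio of these two bounds is exactly $e^{-t}$. The degenerate situations $t=0$ and $M=\infty$ should be handled by limits or by noting that they fall into Case 1.
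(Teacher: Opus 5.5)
Your proof is correct, and it reaches the paper's key inequality by a more elementary route. The paper invokes Pr\'ekopa's theorem to get that $F$ is log-concave, so $f/F$ is non-increasing. It then compares this ratio at the truncation point and at a mode: $f(m)/F(m)\ge f(x_0)/F(x_0)\ge M$, using $F(x_0)\le 1$.

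Your Case 2 proves exactly the same intermediate inequality by hand. The bound $F(m)\le e^{-t}\bigl(F(m)+\int_m^{x_0}f\bigr)$ is precisely $F(m)/F(x_0)\le f(m)/f(x_0)$. You obtain it by dominating $f$ on $(-\infty,m]$ and minorizing it on $[m,x_0]$ by the single exponential $f(m)e^{t(y-m)/\ell}$ given by the secant of $V$. Your final use of $F(m)+\int_m^{x_0}f\le 1$ is the paper's $F(t_{\max})\le 1$.

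What your route gains is self-containedness: only convexity of $V$ at three points is used. It also makes the extremal case transparent. Equality forces both secant bounds to be tight and $F(x_0)=1$, i.e.\ $f$ is exponential on $(-\infty,x_0]$ and vanishes afterwards. The paper's route gains brevity and the global monotonicity of $f/F$, which its proof reuses to characterize equality. That characterization is not part of the statement as given, but the paper later cites it for the $\alpha=\infty$ equality case. If you want your argument to serve there too, add the equality analysis just sketched.

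Two small remarks. In Case 2 you only need $\sup_{x\le m}f\ge f(m)$, so the unimodality step is superfluous. Also, $M=\infty$ cannot occur because log-concave densities are bounded. The only genuine degenerate issue is the existence of a mode, which is settled by working with the upper semicontinuous version of $f$ (the paper makes the same tacit choice when it picks $t_{\max}$).
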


\begin{proof}
Without loss of generality, assume that $m=0$ and define as shorthand
$
p_{-}:=\int_{-\infty}^{0} f(x)\,dx.$
We omit the index $0$ in $f_0^-$ and we write
\[
f^{-}(x):=\frac{f(x)\mathbf 1_{(-\infty,0]}(x)}{p_{-}}.
\]
By definition,
\[
\|f^-\|_\infty
=
\frac{1}{p_{-}}\sup_{x\le 0} f(x).
\]
Thus the claim is equivalent to
\[
\sup_{x\in\mathbb{R}} f(x)
\le
\frac{\sup_{x\le 0} f(x)}{\int_{-\infty}^{0} f(x)\,dx}.
\]

Let $t_{\max}\in\mathbb{R}$ be such that $f(t_{\max})=\sup_{x\in\mathbb{R}} f(x)$. If $t_{\max}\le 0$, then $\sup_{x\le 0} f(x)=\sup_{x\in\mathbb{R}} f(x)$ and $ p_{-}\le 1$, so
\[
\frac{\sup_{x\le 0} f(x)}{p_{-}}\ge \sup_{x\in\mathbb{R}} f(x),
\]
holds trivially. Assume now that $t_{\max}>0$.  
As before, define
\[
F(x):=\int_{-\infty}^{x} f(y)\,dy.
\]
Since $f$ is log-concave,  Pr\'ekopa's theorem implies that $F$ is log-concave on $\mathbb{R}$. Therefore $\log F$ is concave, and its derivative
\[
(\log F)'(x)=\frac{F'(x)}{F(x)}=\frac{f(x)}{F(x)}
\]
is non-increasing on $\mathbb{R}$. In particular, for $0\le t_{\max}$ we have
\[
\frac{f(0)}{F(0)}\ge \frac{f(t_{\max})}{F(t_{\max})}\geq \frac{\sup_{x\in \mathbb{R}}f(x)}{\int_{-\infty}^{\infty} f(x)dx}
=
\sup_{x\in\mathbb{R}} f(x).
\]
If there is equality then $\int_{-\infty}^{\infty} f(x)dx=F(t_{\max})$ and so $f(x) = 0$ on $(t_{\max},+\infty)$. Moreover $f/F=F'/F$ is constant on $[0,t_{\max}]$, thus there exists $c\in\mathbb{R}$ such that $F(t)=F(0)e^{ct}$, for every $t\in[0,t_{\max}]$. Hence $f(t)=F'(t)=F(0)ce^{ct}$. By log-concavity of $f$, this implies that $f(t)\le F(0)ce^{ct}$, for every $t\le t_{\max}$. Hence, integrating, we deduce that $F(t)\le F(0)e^{ct}$, for every $t\le t_{\max}$. Since there is equality for $t=t_{\max}$, we conclude that $f(t)= F(0)ce^{ct}$, for every $t\le t_{\max}$.
\end{proof}

As an immediate consequence of Proposition \ref{min-entroy} and the functional
form of Gr\"unbaum's inequality, we recover the estimate
\[
\|f\|_\infty \leq e f(0)
\]
for every centered log-concave probability density $f$ on $\mathbb R$;
see \cite[Theorem~4]{fradelizisections}.
Indeed, without loss of generality, assume that a maximizer $t_{\max}$ of $f$ satisfies $t_{\max}\ge 0$.
By Proposition~\ref{min-entroy}, we have
\[
\sup_{x\in\mathbb{R}} f(x)\le \frac{f(0)}{F(0)},
\qquad
F(0)=\int_{-\infty}^{0} f(x)\,dx.
\]

The functional Grünbaum inequality (see \cite[Lemma 5.12]{lovasz}) states that for any log-concave probability density on
$\mathbb{R}^n$ and any half-space $H$ containing its centroid,
\[
\int_H f(x)\,dx \ge \frac{1}{e}.
\]
Applying this inequality in dimension $n=1$ with centroid $0$ and halfspace $H=(-\infty,0]$, we obtain
$F(0) \ge \frac{1}{e}.$ Thus,
\[
\sup_{x\in\mathbb{R}} f(x)\le \frac{f(0)}{F(0)}\le e\,f(0).
\]

\medskip

Recall the definition of R{\'e}nyi entropy in \eqref{renyidef}.
We have treated the endpoint cases ($\alpha=0,\alpha=1$ and $\alpha=\infty$). Next we state the general result: 
\begin{theorem}
\label{thm:renyi-truncation}
Let $f$ be a log-concave probability density and for $m \in \RL$, let 
\[
f_{m}^-(x):=\frac{f(x)\mathbf 1_{(-\infty,m]}(x)}{\int_{-\infty}^{m} f(y)\,dy}.
\]
Then, for every $\alpha\in[0,\infty]$, 
\[
h_\alpha\!\big(f_{m}^{-}\big)\le h_\alpha(f).
\]
\end{theorem}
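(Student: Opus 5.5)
The endpoint cases $\alpha\in\{0,1,\infty\}$ are already settled: $\alpha=0$ by the support inclusion, $\alpha=1$ by Theorem~\ref{entropygrunbaumTh}, and $\alpha=\infty$ by Proposition~\ref{min-entroy}. So I fix $\alpha\in(0,1)\cup(1,\infty)$ and, following the strategy announced in the introduction, reduce to the differential entropy case applied to normalized powers of $f$. For $s>0$ and $x\in\mathbb R\cup\{+\infty\}$ I set
\[
G_s(x):=\int_{-\infty}^x f^s,\qquad Z_s:=G_s(+\infty),\qquad g_s:=\frac{f^s}{Z_s},
\]
and I write $g_{s,x}^-$ for the truncation of $g_s$ to $(-\infty,x]$. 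Each $g_s$ is log-concave and, for log-concave $f$, $Z_s<\infty$. I then introduce
\[
L_x(s):=\log G_s(x)-s\log F(x)=(1-s)\,h_s(f_x^-),
\]
with $L_\infty(s)=(1-s)h_s(f)$, and note that $L_x(1)=0$ for every $x$.

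The key observation is that the desired inequality $h_\alpha(f_x^-)\le h_\alpha(f)$ is equivalent, for \emph{both} ranges $\alpha<1$ and $\alpha>1$, to $L_x(\alpha)\le L_\infty(\alpha)$ when $\alpha<1$ and to $L_x(\alpha)\ge L_\infty(\alpha)$ when $\alpha>1$. Writing $L_x(\alpha)=\int_1^\alpha L_x'(s)\,ds$, both cases follow at once from the pointwise derivative comparison
\begin{equation*}
L_x'(s)=\mathbb E_{g_{s,x}^-}[\log f]-\log F(x)\;\ge\;\mathbb E_{g_s}[\log f]=L_\infty'(s)\qquad\text{for all } s>0 .
\end{equation*}
Differentiation under the integral is justified because log-concave densities have exponential tails, so $f^s\lvert\log f\rvert$ is integrable.

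To prove the derivative comparison, I apply Theorem~\ref{entropygrunbaumTh} to the log-concave density $g_s$. Since $h(g_s)=-s\,\mathbb E_{g_s}\log f+\log Z_s$ and $h(g_{s,x}^-)=-s\,\mathbb E_{g_{s,x}^-}\log f+\log G_s(x)$, the inequality $h(g_{s,x}^-)\le h(g_s)$ gives
\[
\mathbb E_{g_{s,x}^-}\log f\ \ge\ \mathbb E_{g_s}\log f+\frac1s\log\frac{G_s(x)}{Z_s}.
\]
It therefore remains to show the mass comparison
\[
\frac{G_s(x)}{Z_s}\ \ge\ F(x)^s .
\]

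This mass comparison is the step I expect to be the main obstacle. It asserts that the escort measure $g_s$ puts at least mass $F(x)^s$ on $(-\infty,x]$, and it is not a formal consequence of the entropy inequality. If it fails for some $s$, my fallback is to avoid splitting the argument in this way and instead prove directly that $x\mapsto L_x(\alpha)$ is monotone in $x$, by mimicking the Lemma preceding Theorem~\ref{entropygrunbaumTh}. The $x$-derivative is
\[
\partial_x L_x(\alpha)=\frac{f(x)^\alpha}{G_\alpha(x)}-\alpha\frac{f(x)}{F(x)},
\]
so what is needed is a comparison between $f(x)^{\alpha-1}F(x)$ and $\alpha\,G_\alpha(x)$, with the sign depending on whether $\alpha<1$ or $\alpha>1$. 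I would attack this comparison via the log-concavity of $F$ and $G_\alpha$ (Pr\'ekopa), reducing by the usual extremal argument to $f$ log-affine on $(-\infty,x]$. In that case both sides can be computed explicitly: for $f=e^{ct}$ one gets $G_\alpha=e^{\alpha cx}/(\alpha c)$ and $F=e^{cx}/c$, which gives equality, consistent with exponentials being the equality cases. Letting $x\to\infty$ then yields $L_\infty$, as in the proof of Theorem~\ref{entropygrunbaumTh}.
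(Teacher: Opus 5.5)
Your main route has a genuine gap, and it goes beyond the mass comparison you flagged. For $s>1$, the inequality $G_s(x)/Z_s\ge F(x)^s$ is literally $L_x(s)\ge L_\infty(s)$. That is exactly the statement $h_s(f_x^-)\le h_s(f)$ you are trying to prove, so invoking it on $(1,\alpha)$ is circular. For $s<1$, it is the \emph{reverse} of the theorem, so it fails whenever the theorem is strict.

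Moreover, your pointwise comparison $L_x'(s)\ge L_\infty'(s)$ is itself false for $s<1$. Take the log-concave density $f(t)=\frac{1}{1+y}\bigl(e^{t}\mathbf 1_{\{t\le 0\}}+\mathbf 1_{\{0<t\le y\}}\bigr)$ with $y>0$, and take $x=0$. Then $L_0(s)=-\log s$ and $L_\infty(s)=\log(1/s+y)-s\log(1+y)$, so $L_0'(s)-L_\infty'(s)=\log(1+y)-\frac{y}{1+sy}$. This is negative for small $s$ (e.g.\ $y=1$, $s=1/4$). So integrating a derivative comparison over $(\alpha,1)$ cannot give the case $\alpha<1$.

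The missing idea is to compare $L/s$ rather than $L$. Your own formulas give $h(g_s)=L_\infty(s)-sL_\infty'(s)$ and $h(g_{s,x}^-)=L_x(s)-sL_x'(s)$. So, with $\Lambda:=L_x-L_\infty$, Theorem~\ref{entropygrunbaumTh} applied to $g_s$ says exactly $s\Lambda'(s)-\Lambda(s)\ge 0$, i.e.\ $s\mapsto\Lambda(s)/s$ is non-decreasing. Since $\Lambda(1)=0$, this yields $\Lambda\le 0$ on $(0,1]$ and $\Lambda\ge 0$ on $[1,\infty)$. That is the theorem in both ranges, and no mass comparison is needed. This is precisely the paper's proof: for the paper's $G(\alpha)=\|f\|_\alpha/\|f\mathbf 1_{(-\infty,0]}\|_\alpha$ one has $\log G(\alpha)=-\Lambda(\alpha)/\alpha-\log F(0)$, and \eqref{Gder} is the identity above.

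Your fallback is a viable and genuinely different route, but its key step is only announced, and Pr\'ekopa is not the right tool. Pr\'ekopa gives monotonicity in $x$ of $f/F$ and of $f^\alpha/G_\alpha$, not a comparison of the two at the same $x$. What you need is $\alpha G_\alpha(x)\ge f(x)^{\alpha-1}F(x)$ for $\alpha\ge 1$, and the reverse for $\alpha\le 1$; then $x\mapsto L_x(\alpha)$ is non-increasing, resp.\ non-decreasing. Write $f(x-u)=f(x)e^{-\chi(u)}$ with $\chi$ convex and $\chi(0)=0$. The needed inequality becomes $\int_0^\infty e^{-\alpha\chi(v/\alpha)}\,dv\ge\int_0^\infty e^{-\chi(v)}\,dv$ (resp.\ $\le$). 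This follows from $\alpha\chi(v/\alpha)\le\chi(v)$ for $\alpha\ge1$ (resp.\ $\chi(v)\le\alpha\chi(v/\alpha)$ for $\alpha\le1$), which is just convexity.

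Completed this way, the fallback is self-contained and bypasses Theorem~\ref{entropygrunbaumTh} entirely. Differentiating this inequality at $\alpha=1$ gives exactly the bound $\log f(x)\le 1+\frac{1}{F(x)}\int_{-\infty}^x f\log f$ used in the paper's Lemma, so it is the R\'enyi analogue of that Lemma.
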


\begin{proof}
The cases $\alpha=0$ and $\alpha=\infty$ were established above.
We may therefore assume that $\alpha\in(0,\infty)$ and, without loss of generality, take $m=0$.
We denote $\mathbb R^- := (-\infty,0]$.  
Define
\[
f^-(x) := \frac{f(x)\mathbf 1_{\mathbb R^-}(x)}{p^{-}},
\qquad
p^{-} := \int_{\mathbb R^-} f(x)\,dx,
\]
and
for $\alpha \ge 0$, let
\[
G(\alpha)
:= \frac{\|f\|_\alpha}{\|f\,\mathbf 1_{\mathbb R^-}\|_\alpha}
= \left(
\frac{\int_{\mathbb R} f(x)^\alpha\,dx}{\int_{\mathbb R^-} f(x)^\alpha\,dx}
\right)^{1/\alpha}.
\]

\medskip

We claim that the function $\alpha \mapsto G(\alpha)$ is non-increasing on $(0,\infty)$.
It suffices to show that $\frac{d}{d\alpha}\log G(\alpha)\le 0$.  
We first write
\[
\log G(\alpha)
=
\frac{1}{\alpha}\log\!\int_{\mathbb R} f(x)^\alpha\,dx
-
\frac{1}{\alpha}\log\!\int_{\mathbb R^-} f(x)^\alpha\,dx.
\]
A direct differentiation yields
\begin{equation}\label{derivada}
\frac{d}{d\alpha}\log G(\alpha)
=
\frac{1}{\alpha^2}
\Bigg[
\alpha\,\frac{\int_{\mathbb R} f(x)^\alpha \log f(x)\,dx}{\int_{\mathbb R} f(x)^\alpha\,dx}
-
\log\!\int_{\mathbb R} f(x)^\alpha\,dx
-
\alpha\,\frac{\int_{\mathbb R^-} f(x)^\alpha \log f(x)\,dx}{\int_{\mathbb R^-} f(x)^\alpha\,dx}
+
\log\!\int_{\mathbb R^-} f(x)^\alpha\,dx
\Bigg].
\end{equation}
To interpret \eqref{derivada} in terms of differential entropies, define the probability densities
\begin{equation} \label{powered}
g_\alpha(x)
:=
\frac{f(x)^\alpha}{\int_{\mathbb R} f(y)^\alpha\,dy},
\qquad
g_\alpha^-(x)
:=
\frac{f(x)^\alpha \mathbf 1_{\mathbb R^-}(x)}{\int_{\mathbb R^-} f(y)^\alpha\,dy}.
\end{equation}
A straightforward computation shows that
\[
h(g_\alpha)
=
-\int_{\mathbb R} g_\alpha(x)\log g_\alpha(x)\,dx
=
-\alpha\,\frac{\int_{\mathbb R} f(x)^\alpha \log f(x)\,dx}{\int_{\mathbb R} f(x)^\alpha\,dx}
+
\log\!\int_{\mathbb R} f(x)^\alpha\,dx,
\]
and similarly,
\[
h(g_\alpha^-)
=
-\int_{\mathbb R^-} g_\alpha^-(x)\log g_\alpha^-(x)\,dx
=
-\alpha\,\frac{\int_{\mathbb R^-} f(x)^\alpha \log f(x)\,dx}{\int_{\mathbb R^-} f(x)^\alpha\,dx}
+
\log\!\int_{\mathbb R^-} f(x)^\alpha\,dx.
\]
Substituting into \eqref{derivada} yields the identity
\begin{equation} \label{Gder}
\frac{d}{d\alpha}\log G(\alpha)
=
-\frac{1}{\alpha^2}
\big(
h(g_\alpha)-h(g_\alpha^-)
\big).
\end{equation}

Since $f$ is log-concave, $f^\alpha$ is log-concave for all $\alpha> 0$, and hence $g_\alpha$ is a log-concave probability density on $\mathbb R$.  
Applying Theorem \ref{entropygrunbaumTh} to $g_\alpha$ and its restriction $g_\alpha^-$, we obtain
\[
h(g_\alpha^-)\le h(g_\alpha).
\]
Together with \eqref{Gder}, this implies
\[
\frac{d}{d\alpha}\log G(\alpha)\le 0,
\]
so that $G(\alpha)$ is non-increasing on $(0,\infty)$.
Consequently,
\[
G(\alpha)\le G(1)\qquad\text{for all }\alpha\ge 1,
\]
and 
\[
G(\alpha)\geq G(1)\qquad\text{for all }\alpha\leq 1,
\]
which is in both cases equivalent to
\[
h_\alpha(f^-)\le h_\alpha(f).
\]
\end{proof}


We now turn to the equality cases. We will need the following lemma:
\begin{lemma}[\cite{santalo}, Lemma 3]
\label{lem:affine-rigidity}
Let $\phi:\mathbb{R}\to \mathbb{R}\cup\{+\infty\}$ be a convex function such that $
\int_{\mathbb{R}} e^{-\phi(t)}\,dt < +\infty.$
Let
\[
U:=\{\phi<+\infty\}.
\]
Suppose that for some $x\in U$ and for almost every $y\in U$, one has
\[
\phi(x)=\phi(y)+(x-y)\phi'(y).
\]
Then for every $z\in U$, the function $\phi$ is affine on the segment $[x,z]$.
\end{lemma}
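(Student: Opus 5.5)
The plan is to turn the pointwise tangent condition into the statement that a certain difference quotient has zero derivative. Fix $x\in U$ as in the hypothesis and a point $z\in U$. By symmetry (replace $\phi(t)$ by $\phi(-t)$), we may assume $z>x$; the case $z=x$ is trivial. Since $U$ is an interval and $\phi$ is convex, $\phi$ is locally Lipschitz on the interior of $U$. In particular it is differentiable almost everywhere there, and the hypothesis makes sense for almost every $y\in(x,z)$.

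The key object is the slope function
\[
\psi(y):=\frac{\phi(y)-\phi(x)}{y-x},\qquad y\in U,\ y>x .
\]
On every compact subinterval of $(x,z)$, $\psi$ is a quotient of a Lipschitz function by a function bounded away from $0$, so it is locally Lipschitz and hence locally absolutely continuous. At every point $y$ where $\phi$ is differentiable,
\[
\psi'(y)=\frac{(y-x)\phi'(y)-\bigl(\phi(y)-\phi(x)\bigr)}{(y-x)^2}.
\]
The hypothesis $\phi(x)=\phi(y)+(x-y)\phi'(y)$ says exactly that the numerator vanishes. Thus $\psi'=0$ almost everywhere on $(x,z)$, and by absolute continuity on compact subintervals, $\psi$ is constant on $(x,z)$, say $\psi\equiv c$. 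This gives
\[
\phi(y)=\phi(x)+c\,(y-x)\qquad\text{for all } y\in(x,z).
\]
Because $\psi$ is built from the value $\phi(x)$ itself, no continuity of $\phi$ at $x$ is needed. This matters in case $x$ is an endpoint of $U$.

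It remains to handle the endpoint $z$, and I expect this to be the only delicate point. If $z$ lies in the interior of $U$, then $\phi$ is continuous at $z$ and the affine formula extends to $y=z$. If $z$ is the right endpoint of $U$, convexity along the segment only gives $\lim_{y\to z^-}\phi(y)\le\phi(z)$, and an upward jump at the single point $z$ cannot be excluded by an almost-everywhere hypothesis. Here I would use the standing convention that the convex potential $\phi$ is lower semicontinuous, which can always be arranged by modifying $\phi$ on a null set without changing the density $e^{-\phi}$. With that convention, $\phi(z)\le\liminf_{y\to z^-}\phi(y)$, so $\phi(z)=\phi(x)+c(z-x)$. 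Hence $\phi$ is affine on $[x,z]$.

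The integrability assumption $\int e^{-\phi}<\infty$ is not really used in this argument. It only guarantees the setting of log-concave probability densities in which the lemma will be applied to the equality cases of Theorem~\ref{thm:renyi-truncation}.
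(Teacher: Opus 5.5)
The paper gives no proof of this lemma; it is quoted from \cite{santalo}, so there is no in-paper argument to compare with. Your proof is correct and self-contained for the one-dimensional statement used here. On every compact subinterval of $(x,z)\subset\operatorname{int}U$ the slope $\psi(y)=(\phi(y)-\phi(x))/(y-x)$ is Lipschitz, and the hypothesis says exactly that $\psi'=0$ almost everywhere. Hence $\phi(y)=\phi(x)+c\,(y-x)$ on $[x,z)$, with no continuity of $\phi$ at $x$ needed. Your route is elementary, and you are right that the integrability assumption plays no role.

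Your remark about the endpoint $z$ is more than a technicality: as literally stated, the lemma is false without lower semicontinuity. Take $\phi=0$ on $[0,1)$, $\phi(1)=1$, and $\phi=+\infty$ off $[0,1]$. This function is convex and $\int e^{-\phi}=1$. It satisfies the hypothesis with $x=0$, since $\phi'(y)=0$ for $y\in(0,1)$. Yet it is not affine on $[0,1]$.

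So normalizing $\phi$ to be lower semicontinuous is the right fix. It changes $\phi$ only at the at most two endpoints of $U$, hence not the density $e^{-\phi}$. This is also the convention implicitly used where the lemma is applied in the paper. There the boundary terms $e^{-\phi(a)}$ and $e^{-\phi(b)}$ in \eqref{eq:frad-step2} come from an integration by parts, so they must be read as one-sided limits, which is what the lower semicontinuous normalization provides.
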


The next proposition gives the one-dimensional equality cases in 
\begin{equation} \label{matthieu}
\log\|g\|_\infty \le 1+\int_{\mathbb R} g\log g,
\end{equation}
which is established in \cite[Proof of Theorem 7]{fradelizi2008increasing}.

\begin{proposition}
\label{prop:fradelizi-equality-1d}
Let $g:\mathbb R\to[0,\infty)$ be a log-concave probability density, and write
$g=e^{-\phi},$ where $\phi:\mathbb R\to \mathbb R\cup\{+\infty\}$ is convex.
Then
\[
\log\|g\|_\infty = 1+\int_{\mathbb R} g\log g
\]
if and only if

\begin{equation} \label{formula}
g(t)=\frac{1}{\alpha+\beta}
\begin{cases}
e^{\frac{t-t_0}{\alpha}}, & t\le t_0,\\[1mm]
e^{-\frac{t-t_0}{\beta}}, & t\ge t_0,
\end{cases}
\qquad\text{for some }t_0 \in \mathbb R, (\alpha, \beta) \in \mathbb R_+^2 \setminus \{(0,0)\}.
\end{equation}
\end{proposition}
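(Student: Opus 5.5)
We first recall how inequality \eqref{matthieu} is proved, and then read off when each step is tight. Write $g=e^{-\phi}$ and let $x$ be a point where $\phi$ attains its minimum, so that $\|g\|_\infty=e^{-\phi(x)}$. By convexity, for almost every $y\in U=\{\phi<\infty\}$,
\[
\phi(x)\ge \phi(y)+(x-y)\phi'(y).
\]
We multiply by $g(y)$ and integrate over $U$. The term $\int (x-y)\phi'(y)e^{-\phi(y)}\,dy$ is handled by integrating by parts, using $\phi'e^{-\phi}=-(e^{-\phi})'$. The boundary terms $(x-y)e^{-\phi(y)}$ vanish at the endpoints of $U$: at infinite endpoints this follows from the exponential decay of log-concave densities, and at finite endpoints the jump contribution has the right sign, so this step gives an inequality. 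The result is
\[
\int (x-y)\phi'(y)e^{-\phi(y)}\,dy\ \ge\ -1 .
\]
Hence $\phi(x)\ge \int\phi g-1$, which is exactly $\log\|g\|_\infty\le 1+\int g\log g$.

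\textbf{Necessity.} Suppose equality holds. Then every inequality in the chain above is an equality. In particular $\phi(x)=\phi(y)+(x-y)\phi'(y)$ for almost every $y\in U$. Lemma~\ref{lem:affine-rigidity} then says that $\phi$ is affine on $[x,z]$ for every $z\in U$. So $\phi$ is affine on $U\cap[x,\infty)$ and on $U\cap(-\infty,x]$, and $\phi$ is minimal at $x=:t_0$.

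Next we use the boundary terms. Equality in the integration by parts forces $(x-y)g(y)\to0$ at each endpoint of $U$. If $U$ had a finite right endpoint $b>t_0$, then $g(b^-)>0$ (since $g$ is log-affine and nonincreasing on $[t_0,b)$), and the boundary term $(x-b)g(b^-)\neq 0$ would spoil equality. So on each side of $t_0$, either $U$ stops exactly at $t_0$, or $U$ extends to infinity with a genuinely decaying slope; a constant piece on a half-line is excluded by integrability.

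Therefore
\[
g(t)=c\,e^{(t-t_0)/\alpha}\ \text{for } t\le t_0,\qquad g(t)=c\,e^{-(t-t_0)/\beta}\ \text{for } t\ge t_0,
\]
where $\alpha=0$ or $\beta=0$ is interpreted as the corresponding side being empty. The normalization $\int g=1$ gives $c(\alpha+\beta)=1$, which is formula \eqref{formula}.

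\textbf{Sufficiency.} This is a direct computation. For $g$ as in \eqref{formula}, $\|g\|_\infty=1/(\alpha+\beta)$. Also
\[
\int g\log g=-\log(\alpha+\beta)-\frac{\alpha\cdot 1+\beta\cdot 1}{\alpha+\beta}=-\log(\alpha+\beta)-1,
\]
since on each side $-\log g+\log c$ is exponentially distributed with mean $1$. Hence $1+\int g\log g=\log\|g\|_\infty$, and equality holds.

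\textbf{Main obstacle.} The delicate step is the boundary analysis in the integration by parts. One has to verify carefully that the boundary terms are nonnegative in general and vanish exactly in the two-sided exponential configuration. In particular, one must rule out a finite support endpoint away from $t_0$, as well as a flat piece, as candidates for equality.
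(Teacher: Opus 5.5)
Your proposal is correct and follows essentially the same route as the paper. Both integrate the tangent-line inequality at the maximizer $t_0$ against $g$ and integrate by parts, leaving nonnegative boundary terms $(b-t_0)g(b^-)+(t_0-a)g(a^+)$. In the equality case, both use Lemma~\ref{lem:affine-rigidity} to get affinity of $\phi$ on each side of $t_0$. The vanishing of these boundary terms then forces each side of $t_0$ either to be empty or to be an infinite half-line with positive decay rate, and the sufficiency direction is the same direct computation.
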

Before giving the proof, we note that \eqref{formula} includes the cases where $\alpha$ or $\beta$ are equal to $0$ (but not both). In other words, the equality cases are precisely the two one-sided exponential
families and the asymmetric double-exponential family.
\begin{proof}
Let $t_0$ be a point where $g$ attains its maximum, equivalently where $\phi$
attains its minimum.
Since $\|g\|_\infty=e^{-\phi(t_0)}$ and $
\int_{\mathbb R} g\log g
=
-\int_{\mathbb R}\phi(y)e^{-\phi(y)}\,dy,$ 
the equality
\[
\log\|g\|_\infty = 1+\int g\log g
\]
is equivalent to
\begin{equation}\label{eq:frad-equality}
\phi(t_0)=\int_{\mathbb R}\phi(y)e^{-\phi(y)}\,dy-1.
\end{equation}
Let
\[
U:=\{\phi<+\infty\},
\qquad
a:=\inf U,\qquad b:=\sup U.
\]
We proceed as in the proof of \cite[Theorem 7]{fradelizi2008increasing}. Convexity gives, for every $y\in U$,
\[
\phi(t_0)\ge \phi(y)+(t_0-y)\phi'(y).
\]
Multiplying by $e^{-\phi(y)}$ and integrating over $U$, we obtain
\begin{equation}\label{eq:frad-step1}
\phi(t_0)
\ge
\int_U \phi(y)e^{-\phi(y)}\,dy
+
\int_U (t_0-y)\phi'(y)e^{-\phi(y)}\,dy.
\end{equation}
An integration by parts yields
\[
\int_U (t_0-y)\phi'(y)e^{-\phi(y)}\,dy
=
\Big[-(t_0-y)e^{-\phi(y)}\Big]_{a}^{b}
-\int_U e^{-\phi(y)}\,dy.
\]
Since $\int_U e^{-\phi}=1$, this becomes
\begin{equation}\label{eq:frad-step2}
\int_U (t_0-y)\phi'(y)e^{-\phi(y)}\,dy
=
-(t_0-b)e^{-\phi(b)}+(t_0-a)e^{-\phi(a)}-1
\ge -1.
\end{equation}
Combining \eqref{eq:frad-step1} and \eqref{eq:frad-step2} gives
\[
\phi(t_0)\ge \int_U \phi(y)e^{-\phi(y)}\,dy-1
=
\int_{\mathbb R}\phi(y)e^{-\phi(y)}\,dy-1.
\]
This is precisely the inequality
\[
\log\|g\|_\infty \le 1+\int_{\mathbb R}g\log g.
\]
Moreover, equality in this inequality is equivalent to \eqref{eq:frad-equality}.
Assume now that equality holds in \eqref{eq:frad-equality}. Then equality must
hold both in \eqref{eq:frad-step1} and \eqref{eq:frad-step2}.
Equality in
\eqref{eq:frad-step1} implies
\[
\phi(t_0)=\phi(y)+(t_0-y)\phi'(y)
\qquad\text{for almost every }y\in U,
\]
because the integrand
\[
\phi(t_0)-\phi(y)-(t_0-y)\phi'(y)
\]
is nonnegative. By Lemma~\ref{lem:affine-rigidity}, $\phi$ is affine on every
segment $[t_0,z]\subset U$. In dimension one this means that $\phi$ is affine
on $[a,t_0]$ and on $[t_0,b]$. Equality in \eqref{eq:frad-step2} means precisely that the boundary term vanishes:
\begin{equation}\label{eq:boundary-vanish}
(t_0-a)e^{-\phi(a)}=0,
\qquad
(b-t_0)e^{-\phi(b)}=0.
\end{equation}
There are exactly two regimes:

\smallskip

\noindent\emph{Case (i):} Suppose first that $t_0$ coincides with one endpoint of the support. If
$t_0=a$, then the support is of the form $[t_0,\infty)$, and since $\phi$ is
affine on the whole support,
\[
\phi(t)=\phi(t_0)+\lambda(t-t_0)
\qquad (t\ge t_0)
\]
for some $\lambda>0$. Hence
\[
g(t)=\lambda e^{-\lambda(t-t_0)}\mathbf 1_{[t_0,\infty)}(t).
\]
This corresponds to the displayed formula with $\alpha=0$ and
$\beta=1/\lambda$. The case $t_0=b$ is symmetric and gives
\[
g(t)=\lambda e^{\lambda(t-t_0)}\mathbf 1_{(-\infty,t_0]}(t),
\]
which corresponds to the displayed formula with $\alpha=1/\lambda$ and
$\beta=0$.

\smallskip

\noindent\emph{Case (ii):} Suppose now that $t_0$ does not coincide with an endpoint of the support. Then
\eqref{eq:boundary-vanish} forces
\[
e^{-\phi(a)}=e^{-\phi(b)}=0.
\]
Since $\phi$ is affine on each side of $t_0$, this implies that $a=-\infty$ and
$b=+\infty$. Therefore
\[
\phi(t)=\phi(t_0)+\frac{t_0-t}{\alpha}
\quad (t\le t_0),
\qquad
\phi(t)=\phi(t_0)+\frac{t-t_0}{\beta}
\quad (t\ge t_0)
\]
for some $\alpha,\beta>0$. Hence
\[
g(t)=C
\begin{cases}
e^{\frac{t-t_0}{\alpha}}, & t\le t_0,\\[1mm]
e^{-\frac{t-t_0}{\beta}}, & t\ge t_0,
\end{cases}
\]
and normalization gives
\[
C=\frac{1}{\alpha+\beta}.
\]

Conversely, a direct computation shows that every density described above
satisfies
\[
\log\|g\|_\infty = 1+\int g\log g .
\]
This completes the proof.
\end{proof}




Finally we characterize the equality cases in Theorem \ref{thm:renyi-truncation}]:
\begin{proposition}
\label{cor:renyi-truncation-equality}
Let $f$ be a log-concave probability density and for $m \in \RL$, let 
\[
f_{m}^-(x):=\frac{f(x)\mathbf 1_{(-\infty,m]}(x)}{\int_{-\infty}^{m} f(y)\,dy}.
\]
Let also $\alpha\in(0,\infty]$. Then for any $m \in \RL$,
\[
h_\alpha(f_m^-)=h_\alpha(f)
\]
if and only if either $\int_{-\infty}^{m} f(y)\,dy =1$, or there exist $\lambda>0$ and $d\ge m$ such that
\[
f(x)=\lambda e^{\lambda(x-d)}\mathbf 1_{(-\infty,d]}(x).
\]
\end{proposition}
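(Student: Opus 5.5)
Throughout, take $m=0$, write $F(x)=\int_{-\infty}^x f$ and $p^-=F(0)$, and split by the range of $\alpha$. The "if" direction is immediate in both alternatives. If $p^-=1$, then $f^-_0=f$. If $f=\lambda e^{\lambda(x-d)}\mathbf 1_{(-\infty,d]}$ with $d\ge 0$, then $f\mathbf 1_{(-\infty,0]}/p^-$ is again exponential with the same rate $\lambda$, just translated to end at $0$. Rényi entropies are translation invariant, so equality holds for every $\alpha$. The work is therefore in the "only if" direction, and I assume $p^-<1$ from now on.

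\emph{Case $\alpha=\infty$.} Equality means $\|f^-\|_\infty=\|f\|_\infty$. In the proof of Proposition~\ref{min-entroy}, if $t_{\max}\le 0$, equality forces $p^-=1$, which we have excluded. So $t_{\max}>0$, and then every inequality in the chain $f(0)/F(0)\ge f(t_{\max})/F(t_{\max})\ge \sup f$ is an equality. The end of that proof already shows this forces $f(t)=F(0)ce^{ct}$ for $t\le t_{\max}$ and $f=0$ beyond $t_{\max}$. This is the claimed form with $d=t_{\max}\ge 0$.

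\emph{Case $\alpha=1$.} I trace equality back through the Lemma. The function $\Psi=\psi+\log F$ is non-decreasing, with $\Psi(0)=h(f^-)$ and $\Psi(\infty)=h(f)$. Equality therefore forces $\Psi'=0$ on $(0,b)$, where $b=\sup\supp f$; note $b>0$ because $p^-<1$. On that interval $f>0$, so $\Psi'(x)=0$ forces equality both in $\|g_x\|_\infty\ge g_x(x)$ and in \eqref{matthieusineq} for $g_x$. By Proposition~\ref{prop:fradelizi-equality-1d}, each $g_x$ is then an (asymmetric) two-sided exponential whose maximum is attained at the right endpoint $x$ of its support. A two-sided exponential has unbounded support, so $g_x$ must be a one-sided exponential $\lambda_x e^{\lambda_x(y-x)}\mathbf 1_{y\le x}$. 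Any two such densities restrict to the same function up to a constant on a common half-line, so $\lambda_x$ is independent of $x$. Hence $f=ce^{\lambda y}$ on $(-\infty,b)$ and $f=0$ beyond, which is the claimed form with $d=b>0$. I would add a remark that $\Psi$ is absolutely continuous, so that a nonnegative derivative with zero total increment does vanish almost everywhere, and continuity in $x$ then handles every point.

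\emph{Case $\alpha\in(0,\infty)\setminus\{1\}$.} I use the monotone function $G$ from the proof of Theorem~\ref{thm:renyi-truncation}. Equality $h_\alpha(f^-)=h_\alpha(f)$ is equivalent to $G(\alpha)=G(1)$. Since $G$ is monotone, this forces $\frac{d}{d\alpha}\log G=0$ on the whole interval between $1$ and $\alpha$. By \eqref{Gder}, $h(g_\beta^-)=h(g_\beta)$ for every $\beta$ in that interval. Here $g_\beta\propto f^\beta$ is log-concave, and its mass to the left of $0$ is $<1$ exactly when $p^-<1$, since $f$ and $f^\beta$ have the same support. Applying the $\alpha=1$ case to $g_\beta$ (any single such $\beta$ suffices) shows $g_\beta$ is a one-sided exponential ending at some $d\ge 0$. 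Hence $f=g_\beta^{1/\beta}$, normalized, is of the same form.

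The main obstacle is the regularity bookkeeping in the $\alpha=1$ case. One must justify that $\Psi'$ vanishes at every point of $(0,b)$ rather than almost everywhere, and that equality in \eqref{matthieusineq} together with the maximum of $g_x$ sitting at its endpoint really excludes the double-exponential regime of Proposition~\ref{prop:fradelizi-equality-1d}. I would settle both using continuity of $f$ on the interior of its support and the explicit list of equality cases.
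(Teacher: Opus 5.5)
Your proposal is correct and follows the paper's proof essentially step by step. For $\alpha=1$ you trace equality through the monotone function $\psi+\log F$ to equality in \eqref{matthieusineq} for every $g_x$ and then invoke Proposition~\ref{prop:fradelizi-equality-1d}; for $\alpha\in(0,\infty)\setminus\{1\}$ you use the constancy of $G$ and \eqref{Gder} to reduce to the $\alpha=1$ case applied to $g_\beta\propto f^\beta$; and for $\alpha=\infty$ you reuse the equality analysis at the end of the proof of Proposition~\ref{min-entroy}, noting also that this extra bookkeeping tightens steps the paper passes over quickly (that $\Psi'(x)=0$ also forces $\|g_x\|_\infty=g_x(x)$, and that one should work at an interior $\beta$ rather than at the endpoint $\alpha$).
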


\begin{proof}
Consider first the differential entropy case $\alpha =1$ and define, as before, $p_m^- := \int_{-\infty}^{m} f(y)\,dy$. If $p_m^{-}=1$, then $f_m^-=f$ a.e., so equality is immediate. Assume now that $p_m^{-}<1$ and that
\[
h(f_m^-)=h(f).
\]
Let
\[
d:=\sup\operatorname{supp}(f).
\]
For $x\in[m,d)$, define
\[
F(x):=\int_{-\infty}^x f(y)\,dy,
\qquad
g_x(y):=\frac{f(y)\mathbf 1_{(-\infty,x]}(y)}{F(x)}.
\]
By the proof of Theorem~\ref{entropygrunbaumTh}, the equality $h(f_m^-)=h(f)$ forces equality in   
inequality~\eqref{matthieusineq}
for every truncated density $g_x$, $x\in[m,d)$. 
By Proposition~\ref{prop:fradelizi-equality-1d}, each $g_x$ must belong to one of the equality regimes described there. Since
\[
\operatorname{supp}(g_x)\subset(-\infty,x],
\]
the asymmetric double-exponential regime is impossible, and the only possible
one-sided regime is the exponential supported on a left half-line. Hence, for every $x\in[m,d)$, there exists $\lambda_x>0$ such that
\[
g_x(y)=\lambda_x e^{\lambda_x(y-x)}\mathbf 1_{(-\infty,x]}(y).
\]
Equivalently,
\[
f(y)=F(x)\lambda_x e^{\lambda_x(y-x)}
\qquad (y\le x).
\]
Comparing these identities for different values of $x$, we conclude that $\lambda_x$ does not depend on $x$. It follows that $f$ must be a one-sided exponential supported on a left half-line, namely
\[
f(x)=\lambda e^{\lambda(x-d)}\mathbf 1_{(-\infty,d]}(x)
\]
for some $\lambda>0$ and some $d\ge m$. The converse is immediate by the memoryless property of the exponential density.

Now let $0<\alpha \neq 1$. By translation invariance, we may assume $m=0$. Let
\[
p^{-}:=\int_{-\infty}^0 f(x)\,dx,
\qquad
f^-(x):=\frac{f(x)\mathbf 1_{(-\infty,0]}(x)}{p^{-}}
\quad \text{and} \quad  
G(\alpha):=\frac{\|f\|_\alpha}{\|f\mathbf 1_{(-\infty,0]}\|_\alpha}.
\]

Suppose $h_\alpha(f^-)=h_\alpha(f)$. 
As in the proof of
Theorem~\ref{thm:renyi-truncation}, we deduce that $G(\alpha)=G(1)$. Since
$G$ is non-increasing on $(0,\infty)$, the function $G$
must therefore be constant on $[\min\{\alpha,1\},\max\{\alpha,1\}]$. By \eqref{Gder}, we have equality in $h(g_{\alpha}^-) = h(g_{\alpha})$, where $g_{\alpha}$ is defined as in \eqref{powered}. By the differential entropy case, we conclude that
either $\operatorname{supp}(g_{\alpha})\subset(-\infty,0]$, i.e.\ $p^{-}=1$, or else $g_{\alpha}$ is a one-sided exponential density. Since $g_\alpha\propto f^\alpha$, the latter implies that
\[
f(x)=\lambda e^{\lambda(x-d)}\mathbf 1_{(-\infty,d]}(x)
\]
for some $\lambda>0$ and $d\ge0$. The converse is again immediate by the memoryless property of the exponential density.  The case \(\alpha=\infty\) was already proved in Proposition \ref{min-entroy}.
\end{proof}

\subsection{The centering assumption in higher dimensions}
\label{counterexamplesec}

Finally, we record the example mentioned in the Introduction, which shows that, already for min-entropy in dimension
two, one cannot expect an analogous statement without a centering assumption on the
cutting hyperplane.

\begin{example} \label{counterexample}
Let \(a\geq1\), and set
\[
H=\{x\in\mathbb R^2:x_1+x_2=1\},\qquad
H^+=\{x\in\mathbb R^2:x_1+x_2\ge 1\},
\qquad
H^-=\{x\in\mathbb R^2:x_1+x_2\le 1\}.
\]
Consider $f(x) = (1-\frac{x_1}{a}-\frac{x_2}{a})_+, x_1,x_2 \geq 0.$ Then 
$$
\int_{\mathbb{R}^2} f = \frac{a^2}{6},\hspace{0.2cm} \max_{x \in H^+}f(x) = 1-\frac{1}{a}.
$$
On the other hand, direct computation shows 
$$
\int_{H^-}f = \frac{1}{2}-\frac{1}{3a}
$$
and thus
$$
\int_{H^+}f = \frac{a^2}{6}-\frac12+\frac1{3a}.
$$
The inequality 
\begin{equation} \label{ineqinexample}
\frac{\max_{H^+}f}{\max f} \geq \frac{\int_{H^+}f}{\int_{\mathbb{R}^2} f}
\end{equation}
is equivalent to 
$$
1 \leq \frac3{a}-\frac2{a^2}
$$
which is 
$$
(a-1)(a-2) \leq 0,
$$
i.e. satisfied for $a \in [1,2].$
One verifies that
$$
\frac{\int_{\mathbb{R}^2} xf(x)dx}{\int_{\mathbb{R}^2} f(x)dx} = (a/4,a/4).
$$

Hence $\int \frac{xf(x)}{\int f} dx \in H$ if and only if $a=2$. Consequently, taking any \(a>2\), the barycenter of \(f\) does not lie on \(H\), and the inequality
\eqref{ineqinexample} fails. This shows that, in dimension two, the condition that the cutting hyperplane pass through the barycenter cannot simply be omitted in the min-entropy truncation inequality.

\end{example}

\section{Proof of Gr\"unbaum inequalities for differential and min--entropy}
\label{grunbaumSec}

Let $f$ be a zero-mean log-concave probability density on $\mathbb R$,
and 
\[
        f^+(x):=
        \frac{f(x)\mathbf 1_{\{x\ge 0\}}}{\int_0^\infty f(y)\,dy}.
\]
In this section, we would like to understand, for every $\alpha>0$, what is the best 
constant $C_\alpha<\infty$ such that
\[
        h_\alpha(f^+) \ge h_\alpha(f)-C_\alpha .
\]
Such a constant exists by monotonicity of R{\'e}nyi entropy together with the reverse comparison for log-concave densities~\cite{fradelizilimadiman} and Theorem \ref{thm:sharp-reverse-grunbaum-shannon} for differential entropy that we prove later in this section. 

Since $h_0(f)=\log |\operatorname{supp} f|$, no uniform finite constant should
hold at $\alpha=0$. Thus, we must have
\[
        C_\alpha\longrightarrow +\infty
        \qquad\text{as }\alpha\downarrow 0.
\]

Moreover, as we will see later in this section, the extremizers are different for $\mathrm{min}$--entropy and for differential entropy. This indicates that a unified reverse result is more subtle to obtain and we expect the extremizers to be some interpolating family of densities. Below we establish sharp reverse inequalities for the limiting cases $\alpha=1,\infty$. We will need the following lemmas.

\begin{lemma} [Lemma of Section 2.1. in \cite{milmanisotropic}]
\label{lem:moment-lower-decreasing}
Let $g:[0,\infty)\to[0,\infty)$ be integrable. 
\[
        \int_0^\infty t g(t)\,dt
        \ge
        \frac{\left(\int_0^\infty g(t)\,dt\right)^2}{2\sup g(x)}
\]
with equality if and only if $g$ is constant on an interval $[0,a]$ for some $a \geq 0.$
\end{lemma}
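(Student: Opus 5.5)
The plan is to use a rearrangement (bathtub-type) comparison: among functions with the same integral and the same upper bound, the first moment is smallest when all the mass is pushed to the origin. Set
\[
m:=\int_0^\infty g(t)\,dt,\qquad M:=\sup g .
\]
First dispose of the degenerate cases.
\begin{itemize}
\item If $M=+\infty$, the right-hand side is $0$, so the inequality is trivial. Equality then forces $\int_0^\infty t g(t)\,dt=0$, hence $g=0$ a.e.
\item If $m=0$, then $g=0$ a.e. and the statement is trivial, with $a=0$.
\end{itemize}
So assume $0<m<\infty$ and $0<M<\infty$. Put $a:=m/M$ and let $g_0:=M\mathbf 1_{[0,a]}$ be the candidate extremizer. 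It has the same integral as $g$, namely $\int_0^\infty g_0=Ma=m$, and its first moment is
\[
\int_0^\infty t g_0(t)\,dt=\frac{Ma^2}{2}=\frac{m^2}{2M},
\]
which is exactly the right-hand side of the lemma.

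The key step is to compare $g$ with $g_0$. Since $\int_0^\infty (g-g_0)=0$, we can subtract $a\int_0^\infty(g-g_0)=0$ to get
\[
\int_0^\infty t g(t)\,dt-\frac{m^2}{2M}=\int_0^\infty t\,(g-g_0)(t)\,dt=\int_0^\infty (t-a)\,\big(g(t)-g_0(t)\big)\,dt .
\]
Assuming $\int_0^\infty t g(t)\,dt<\infty$ (otherwise there is nothing to prove), I then check the sign of the integrand pointwise.
\begin{itemize}
\item On $[0,a]$ we have $t-a\le 0$ and $g-g_0=g-M\le 0$, so the product is nonnegative.
\item On $(a,\infty)$ we have $t-a>0$ and $g-g_0=g\ge 0$, so the product is again nonnegative.
\end{itemize}
Hence the integrand is nonnegative everywhere, and the inequality follows.

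For the equality case, the integral of this nonnegative function vanishes exactly when the integrand is $0$ a.e. This means $g=M$ a.e. on $[0,a]$ (except possibly at the single point $t=a$) and $g=0$ a.e. on $(a,\infty)$. In other words, $g=M\mathbf 1_{[0,a]}$ a.e., which is the stated equality case of a function constant on $[0,a]$ and vanishing afterwards, understood up to null sets. The converse, that such a $g$ gives equality, is the computation of $\int_0^\infty t g_0$ above.

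There is no real obstacle in this argument. The only points needing care are the infinite or degenerate cases handled first, and reading the equality statement as holding almost everywhere.
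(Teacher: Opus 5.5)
Your proof is correct and complete. It is the standard comparison with the extremal function $M\mathbf 1_{[0,a]}$, $a=m/M$, using pointwise nonnegativity of $(t-a)\bigl(g-M\mathbf 1_{[0,a]}\bigr)$; the paper does not reprove the lemma but cites Milman--Pajor, where the same device is the usual proof. Your reading of the equality case as $g=M\mathbf 1_{[0,a]}$ almost everywhere (constant on $[0,a]$ and zero beyond) is the right one, and it is how the lemma is used in Theorem~\ref{minentropyreverseTh}.
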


\begin{lemma}[Borell \cite{borell}]
\label{lem:moment-upper-logconcave}
Let $g:[0,\infty)\to[0,\infty)$ be integrable and log-concave. 
Then
\[
        \int_0^\infty t g(t)\,dt
        \le
        \frac{\left(\int_0^\infty g(t)\,dt\right)^2}{g(0)}
\]
with equality if and only if $g$ is exponential on $[0,+\infty).$
\end{lemma}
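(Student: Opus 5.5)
The plan is to compare $g$ with the exponential function that has the same value at the origin and the same total mass, and to use a single-crossing argument. We may assume $g(0)>0$: if $g(0)=0$ the right-hand side is $+\infty$ (or, by log-concavity, $g\equiv 0$ near the origin), and there is nothing to prove. We also assume $g$ is not a.e. zero. Set $I:=\int_0^\infty g$ and $\lambda:=g(0)/I$, and define
\[
e(t):=g(0)e^{-\lambda t},\qquad t\ge 0 .
\]
Then $\int_0^\infty e = g(0)/\lambda = I$ and $\int_0^\infty t\,e(t)\,dt = g(0)/\lambda^2 = I^2/g(0)$. So the inequality is equivalent to
\[
\int_0^\infty t\,\bigl(g(t)-e(t)\bigr)\,dt\le 0 .
\]

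The key step is the sign pattern of $g-e$. Consider $\psi:=\log g-\log e = \log g + \lambda t - \log g(0)$, with values in $[-\infty,\infty)$. It is concave because $\log g$ is concave and $\lambda t$ is affine, and $\psi(0)=0$. By concavity, the superlevel set $\{\psi\ge 0\}$ is an interval containing $0$, say with right endpoint $s\in[0,\infty]$. Hence $g\ge e$ on $[0,s)$ and $g\le e$ on $(s,\infty)$. This also covers the case where $g$ has bounded support, since there $\psi=-\infty$ past the support.

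If $s=\infty$, then $g\ge e$ everywhere, and $\int (g-e)=0$ forces $g=e$ a.e., giving equality. Otherwise $s<\infty$, and using $\int_0^\infty (g-e)=0$ we write
\[
\int_0^\infty t\,(g-e)\,dt=\int_0^\infty (t-s)\,(g-e)\,dt .
\]
The integrand on the right is pointwise $\le 0$: for $t<s$ we have $t-s<0$ and $g-e\ge 0$, and for $t>s$ we have $t-s>0$ and $g-e\le 0$. This proves the inequality. Integrability of $t\,g$ is automatic, since log-concave integrable functions have exponential tails.

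For the equality case, equality forces $(t-s)(g-e)=0$ a.e., hence $g=e$ a.e. on $[0,\infty)$, which is the exponential case; the converse is the computation above. The only point needing care is the single-crossing claim. It rests on $\psi(0)=0$ together with the concavity of $\psi$, so I do not expect a real obstacle beyond handling the degenerate cases $g(0)=0$ and $s\in\{0,\infty\}$ cleanly.
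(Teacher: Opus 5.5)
Your proof is correct. The paper gives no argument for this lemma and simply quotes it from Borell, so your proposal supplies what the paper takes on citation. Your route is to compare $g$ with the exponential $g(0)e^{-\lambda t}$ that has the same value at $0$ and the same mass. Concavity of $\log g-\log e$ together with its vanishing at $0$ gives a single sign change. This single-crossing comparison is the classical way Borell-type moment inequalities for log-concave functions are proved. Its advantage here is that the equality case falls directly out of the pointwise sign of $(t-s)(g-e)$, which is exactly what the paper needs in the equality analysis of Theorem~\ref{minentropyreverseTh}.

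A few small points are worth tidying.

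\emph{The case $g(0)=0$.} Your parenthetical claim that $g(0)=0$ forces $g\equiv 0$ near the origin is false: $g(t)=e^{-t}\mathbf 1_{(0,\infty)}(t)$ is log-concave on $[0,\infty)$ with $g(0)=0$. The claim is not needed, though. In that case the right-hand side is $+\infty$ while the left-hand side is finite by exponential tails, so the inequality is strict and no equality case arises.

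\emph{From a.e.\ equality to exponential.} To pass from $g=e$ a.e.\ to $g$ being exponential, note that $\log g$ is concave and agrees a.e.\ with an affine function. Its effective domain must therefore contain $(0,\infty)$, and by continuity in the interior it agrees with that affine function everywhere on $(0,\infty)$. Finally, $g(0)=e(0)$ by construction.

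\emph{The non-degeneracy assumption.} Your standing assumption that $g$ is not a.e.\ zero is genuinely needed for the equality statement as written. For example, $g=c\mathbf 1_{\{0\}}$ with $c>0$ is log-concave and gives equality $0=0$ without being exponential. This is a defect of the statement, not of your argument.
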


We first consider the limiting case $\alpha=\infty$.
\begin{theorem} \label{minentropyreverseTh}
Let $f:\mathbb R\to[0,\infty)$ be a log-concave probability density such that
\[
        \int_{\mathbb R} x f(x)\,dx=0.
\]
Then
\[
        h_\infty(f^+)
        \ge
        h_\infty(f)-\log(1+\sqrt2).
\]
Equivalently,
\begin{equation} \label{sqrt2equality}
        \sup_{\mathbb R_+} f(x)
        \le
        (1+\sqrt2)
        \left(\sup_{x\in\mathbb R} f(x)\right)
        \int_0^\infty f(x)\,dx .
\end{equation}
There is equality if and only if $f$ is constant on a finite interval $[-a,0], a \geq 0,$ and exponential on $\mathbb R_+$.
\end{theorem}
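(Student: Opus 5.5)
The plan is to turn the centering condition into a comparison between the first moments of the two halves of $f$. The right half will be bounded from above by Lemma~\ref{lem:moment-upper-logconcave} (Borell), and the left half from below by Lemma~\ref{lem:moment-lower-decreasing}. Write $p_+=\int_0^\infty f$, $p_-=1-p_+$, $M=\sup_{\mathbb R} f$, $s=\sup_{\mathbb R_+} f$ and $m_-=\sup_{x\le 0}f(x)$, so that $M=\max\{s,m_-\}$. The inequality to prove is \eqref{sqrt2equality}, i.e.\ $s/(Mp_+)\le 1+\sqrt2$.

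First, the zero-mean condition gives $\int_0^\infty t f(t)\,dt=\int_0^\infty t f(-t)\,dt$. On the right, $t\mapsto f(t)$ restricted to $[0,\infty)$ is log-concave, so Lemma~\ref{lem:moment-upper-logconcave} gives
\[
\int_0^\infty t f(t)\,dt\le \frac{p_+^2}{f(0)}.
\]
On the left, Lemma~\ref{lem:moment-lower-decreasing} applied to $g(t)=f(-t)$ gives
\[
\int_0^\infty t f(-t)\,dt\ge \frac{p_-^2}{2m_-}.
\]
Combining the two yields the key estimate
\[
p_-\le \sqrt{2m_-/f(0)}\;p_+ .
\]

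Second, split according to where the maximum of $f$ sits.

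\emph{Case 1: the maximum is attained at a point $\le 0$.} By log-concavity $f$ is non-increasing on $\mathbb R_+$, so $s=f(0)$ and $m_-=M$. Put $r=s/M\in(0,1]$. From $1=p_++p_-\le p_+(1+\sqrt{2/r})$ we get
\[
\frac{s}{Mp_+}\le r+\sqrt{2r}\le 1+\sqrt2,
\]
where the last step uses that $r+\sqrt{2r}$ is increasing in $r$.

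\emph{Case 2: the maximum is attained at a point $>0$.} Then $s=M$, and $f$ is non-decreasing on $(-\infty,0]$, so $m_-=f(0)$. The key estimate becomes $p_-\le\sqrt2\,p_+$, hence $p_+\ge 1/(1+\sqrt2)$, which is exactly what is needed since $s/M=1$.

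For the equality case, in Case 1 we need $r=1$ (so $f(0)=M$), equality in Lemma~\ref{lem:moment-lower-decreasing} (so $f(-\cdot)$ is constant on some $[0,a]$, i.e.\ $f$ is constant on $[-a,0]$), and equality in Lemma~\ref{lem:moment-upper-logconcave} (so $f$ is exponential on $\mathbb R_+$). In Case 2, equality in Lemma~\ref{lem:moment-upper-logconcave} forces $f$ to be exponential on $\mathbb R_+$. The exponent cannot be increasing, since $f$ is integrable, so $f$ would be non-increasing on $\mathbb R_+$ and its maximum would be attained at $0$, contradicting the hypothesis of Case 2. Hence equality in Case 2 only occurs in configurations already covered by Case 1. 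Conversely, for $f$ constant on $[-a,0]$ and exponential on $\mathbb R_+$, with parameters tied together by the zero-mean condition, one checks directly that both moment bounds and $r=1$ are equalities, so equality holds in \eqref{sqrt2equality}. The degenerate value $a=0$ should also be checked against the formulation of the lemmas.

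The main obstacle is seeing that the same pair of lemmas handles both cases. The tempting route of using Gr\"unbaum's bound $p_+\ge 1/e$ in Case 2 is too weak, since $1/e<\sqrt2-1$. The point is that when the mode lies to the right of $0$, monotonicity on the left gives $m_-=f(0)$. This cancels the $f(0)$ appearing in Borell's bound, and the constant $\sqrt2$ then comes out directly.
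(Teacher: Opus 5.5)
Your proof is correct and follows the paper's argument. Centering equates the two half-line first moments, Borell's lemma bounds the right one above by $p_+^2/f(0)$, Lemma~\ref{lem:moment-lower-decreasing} bounds the left one below by $p_-^2/(2\sup_{\mathbb R_-}f)$, and one splits according to where the mode sits. The only difference is bookkeeping: tracking $r=f(0)/M$ gives $s/(Mp_+)\le r+\sqrt{2r}$ in one step, which replaces the paper's separate treatment of the case $p_-\le\sqrt2\,p_+$ and makes the equality condition $r=1$ fall out directly.
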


\begin{proof}

Set
\[
        p_{+}:=\int_0^\infty f(x)\,dx
        \quad \text{and } \quad
        p_{-}:=\int_{-\infty}^0 f(x)\,dx.
\]

Note first that if $p_{-}\leq \sqrt{2} p_{+}$ the inequality follows immediately since 
\begin{equation} \label{qleqsqrtp}
    \sup_{\mathbb R_+} f(x)=(p_{+}+p_{-})\sup_{\mathbb R_+} f(x) \leq (1+\sqrt2)p_{+} \sup_{\mathbb R_+} f(x) \leq (1+\sqrt2)p_{+} \sup_{x\in \mathbb R} f(x).
\end{equation}

If $\sup_{\mathbb R_+} f(x) = \sup_{x\in \mathbb R} f(x)$, then we apply Lemma \ref{lem:moment-lower-decreasing} to $g(t) = f(-t)\mathbf 1_{\mathbb R_+}$ and Lemma \ref{lem:moment-upper-logconcave} to $f(t) \mathbf 1_{\mathbb R_+}$ and obtain 
$$
\frac{(p_{-})^{2}}{2f(0)}\leq \frac{(p_{+})^{2}}{f(0)},
$$
whence 
\begin{equation} \label{asabove}
    p_{-} \leq \sqrt{2}p_{+}
\end{equation}
and the claimed inequality holds by \eqref{qleqsqrtp}. 

So it suffices to prove the inequality when $\sup_{x \in \mathbb R_-}f(x) = \sup_{x \in \mathbb R}f(x)$ and $p_{-} > \sqrt2 p_{+}.$ Then by Lemmas \ref{lem:moment-lower-decreasing} and \ref{lem:moment-upper-logconcave} again, 
\begin{equation} \label{lemmasqgreater}
\frac{(p_{-})^{2}}{2\sup_{x \in \mathbb R_-}f(x)}\leq \frac{(p_{+})^{2}}{f(0)}.
\end{equation}
But then, since in this case $\sup_{\mathbb R_+} f(x) = f(0)$
$$
\frac{\sup_{\mathbb R_+} f(x)}{\sup_{x \in \mathbb R}f(x) \int_0^{\infty}f(x)dx}\leq \frac{2p_{+}}{(p_{-})^{2}} = (1+\frac{p_{-}}{p_{+}})\frac{2(p_{+})^{2}}{(p_{-})^{2}} \leq 1+\sqrt{2}
$$
since $\frac{p_{+}}{p_{-}} < \frac{1}{\sqrt{2}}.$ 

Now suppose there is equality in \eqref{sqrt2equality}. By \eqref{qleqsqrtp}, $p_{-}<\sqrt2 p_{+} $ is impossible as otherwise the inequality would be strict. If $\sup_{\mathbb R_+} f(x) = \sup_{x\in \mathbb R} f(x)$, we must have $p_{-} = \sqrt{2}p_{+}$ as in \eqref{asabove}. On the other hand, if $p_{-} \geq \sqrt 2  p_{+}$ and $\sup_{\mathbb R_-} f(x) = \sup_{x\in \mathbb R} f(x)$ we must have $p_{-} = \sqrt{2}p_{+}$, as otherwise we would have strict inequality by \eqref{lemmasqgreater}. 

So it suffices to characterize the equality case if $p_{-} = \sqrt 2 p_{+}$. But then 
equality in \eqref{sqrt2equality} reads 
$$
\sup_{x\in \mathbb R_+} f(x) = \max_{\mathbb R}f(x).
$$ 
In this case there must be equality in both the applications of Lemma \ref{lem:moment-lower-decreasing} to $g(t)$ and Lemma \ref{lem:moment-upper-logconcave}, which yields the claimed characterization. 
\end{proof}

Next we prove the $\alpha = 1$ case of Theorem \ref{secondThIntro}. In the proof we use Proposition \ref{prop:normalized-variational-inequality}, which is proved at the end of this section. 
\begin{theorem}
\label{thm:sharp-reverse-grunbaum-shannon}
Let \(X\) be a real-valued random variable with log-concave density \(g\), and assume that
\[
        \int_{\mathbb R} xg(x)\,dx=0.
\]
Then
\[
        h(g^+)
        \ge
        h(g)-\frac{e}{e-1}H_2(1/e).
\]
Equality holds if and only if $g$ is the exponential density
\[
        g(x)=\lambda e^{\lambda(x-1/\lambda)}
        \mathbf 1_{(-\infty,1/\lambda]}(x),
        \qquad \lambda>0.
\]
\end{theorem}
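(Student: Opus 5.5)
The plan is to reduce the statement to a one-sided variational inequality and then to prove that inequality by extremal-point arguments. Write $p^\pm$ for the masses of $g$ on $\mathbb R_\pm$, and $g^\pm$ for the normalized restrictions. The identity \eqref{elementaryid} gives
\[
h(g)-h(g^+)=\frac{p^-\big(h(g^-)-h(g)\big)+H_2(p^+)}{p^+}.
\]
Using only $h(g^-)\le h(g)$ (Theorem \ref{entropygrunbaumTh}) and $p^+\ge 1/e$ recovers the non-sharp constant $eH_2(1/e)$. So the loss must be recovered from the term $p^-(h(g^-)-h(g))$, or equivalently by controlling $h(g^+)$ directly. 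I would therefore not go through $g^-$. Instead I would treat $h(g^+)-h(g)$ as a functional of the two half-densities $u:=g\mathbf 1_{\mathbb R_+}$ and $v(t):=g(-t)\mathbf 1_{\mathbb R_+}(t)$. These are coupled only through three things: $u(0)=v(0)$ (from log-concavity across $0$), $\int u+\int v=1$, and the centering condition $\int_0^\infty t\,u(t)\,dt=\int_0^\infty t\,v(t)\,dt$.

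The first step is to eliminate $v$. For fixed mass $p^-$, first moment $m$ and boundary value $v(0)$, I want the worst-case $h(v/p^-)$, which is a one-sided problem on $\mathbb R_+$. I expect it to be exactly the content of Proposition~\ref{prop:normalized-variational-inequality}: a lower bound for the entropy of a log-concave density on $\mathbb R_+$ in terms of its normalized first moment and its value at $0$, attained by one-piece log-affine densities. Indeed, in the extremal case $v$ is a truncated exponential on $[0,1/\lambda]$, increasing towards $0$. Feeding this bound into $h(g)=p^+h(g^+)+p^-h(g^-)+H_2(p^+)$ should leave an explicit function of a few scalar parameters: $p^+$, the value $g(0)$, and the common first moment. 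It should also leave $h(g^+)$ entering with a positive coefficient, so that the target inequality $h(g^+)\ge h(g)-C_1$ becomes a statement about $g^+$ alone under moment and boundary constraints. I would then apply the same proposition, or its reflected form, to $g^+$. Finally I would optimize the resulting two-variable function by calculus. I expect the optimum to sit at the boundary configuration where $g^+$ has no exponential tail, giving $p^+=1-1/e$ and the value $C_1=\frac{e}{e-1}-\log(e-1)$. As a sanity check, the right side of the displayed identity equals $C_1$ for the stated exponential.

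The main obstacle is proving Proposition~\ref{prop:normalized-variational-inequality} itself. My route follows the description in the Introduction. Fixing the finitely many linear constraints (mass, first moment, value at $0$), the admissible log-concave densities form a set whose relevant extreme points, by the degrees-of-freedom method of \cite{fradeliziguedon}, are log-affine on at most three intervals. Because entropy is concave, a minimizer can be taken among these extreme points. On this finite-dimensional family (slopes and breakpoints), I would write down the Fritz--John conditions of Lemma~\ref{FJ}. The goal is to show that any interior stationary point with two or three genuine pieces violates them, typically because the multipliers would force two different slopes to coincide. That leaves a single log-affine piece, where the inequality reduces to a one-variable check.

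For equality, every inequality used must be tight. The one-piece rigidity in the proposition, combined with the boundary analysis of the last optimization, should force $g^+$ to be the restriction of a left exponential with its endpoint at $1/\lambda$, and $v$ to continue the same exponential. This gives exactly $g(x)=\lambda e^{\lambda(x-1/\lambda)}\mathbf 1_{(-\infty,1/\lambda]}(x)$. The converse direction is the direct computation \eqref{exponentialvalue}, mirrored.
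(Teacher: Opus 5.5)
\textbf{The gap: the bound you use on $v$ points the wrong way.} Your reduction breaks at the step where you eliminate $v$. Write your identity in the equivalent form $h(g)-h(g^+)=H_2(p^+)+p^-\bigl(h(g^-)-h(g^+)\bigr)$. Here $h(g^-)$ enters with a positive sign. So the worst case for $v$ is the one that \emph{maximizes} $h(v/p^-)$, and what you need is an \emph{upper} bound on $h(g^-)$. Proposition~\ref{prop:normalized-variational-inequality} only gives lower bounds on entropy. Inserted into the decomposition, it bounds $h(g)$ from below, which can never yield $h(g)\le h(g^+)+C_1$.

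Your guess about the extremal is also reversed. For $g(x)=\lambda e^{\lambda x-1}\mathbf 1_{(-\infty,1/\lambda]}(x)$, the truncated exponential on $[0,1/\lambda]$ is $u$. By contrast, $v(t)=\lambda e^{-\lambda t-1}$ is a full exponential on $[0,\infty)$.

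\textbf{What the paper does instead.} Contrary to your remark, the paper does go through $g^-$, but it controls $h(g^-)$ by its first moment rather than by $h(g)$. The exponential maximizes entropy among densities on a half-line with prescribed mean, so $h(g^-)\le 1+\log m_-$ with $m_-:=\int|x|g^-(x)\,dx$. This needs neither log-concavity nor boundary data. The proposition is then used only once, on $g^+$, giving $h(g^+)\ge \log m_++2\log(e-1)-\frac1{e-1}$. Centering gives $p^+m_+=p^-m_-$, so both means drop out and $g(0)$ never appears. What remains is the strictly concave one-variable function $p^-\mapsto 2H_2(p^-)+p^-\bigl(1-2\log(e-1)+\frac1{e-1}\bigr)+\log(1-p^-)$. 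It is maximized at $p^-=1/e$ with value $C_1$.

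\textbf{A second gap: the missing slope condition at $0$.} You claim $u$ and $v$ are coupled only through $u(0)=v(0)$, the total mass and the equal first moments. That is false in a way that matters: log-concavity across $0$ also requires $(\log u)'_+(0)+(\log v)'_+(0)\le 0$. Without it the problem is unbounded. Take $u\propto e^{\kappa t}$ on $[0,1]$, and let $v$ climb quickly from the same tiny value $u(0)$ and then decay exponentially. Then $h(g^+)\approx 1-\log\kappa\to-\infty$ while $h(g^-)$ stays bounded. This slope condition gives $g(t)\le g(0)e^{\kappa t}$ for $t<0$. Combined with centering, that is exactly what verifies the hypothesis $\sqrt{g^+(0)}\ge\kappa\sqrt{m_+}$ of the proposition for $g^+$ (Lemma~\ref{lemmasatisfiescond}). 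Your plan never addresses this step.

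\textbf{Equality case.} You do not need a rigidity statement for the proposition, and its proof by approximation and compactness does not supply one anyway. Equality forces $p^-=1/e$. The equality case of the functional Gr\"unbaum inequality then identifies $g$. Your sketch of the proof of the proposition itself (degrees of freedom, then Fritz--John) does match the paper's.
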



\begin{proof}
 
Let
\[
        p_+ := \int_0^\infty g(x)\,dx,
        \quad p_- = 1-p_+ \quad \text{and }
        g^+(x):=\frac{g(x)\mathbf 1_{\{x\ge0\}}}{p_+}, g^-(x):=\frac{g(x)\mathbf 1_{\{x<0\}}}{p_-}
\]
be the densities of \(X^+\) and \(X^-\). Let also
\[
        m_+ := \int_0^\infty x g^+(x)\,dx \quad \text{and } m_- = \int_{-\infty}^0 |x| g^-(x)\,dx.
\]
Therefore $p_+m_+ = p_-m_-$. We have
$$
h(g) = H_2(p_+) + p_+h(g^+)+p_-h(g^-)
$$
and thus 
\begin{align}
    h(g)-h(g^+) &= H_2(p_+) + p_-\bigl(h(g^-) - h(g^+)\bigr).
\end{align}
Since the negative exponential maximizes entropy under fixed mean on $\mathbb R_-$ \cite[Example 12.2.5]{cover:book} (as can be seen by $D(g^-\|\mathrm{exp}_-(\frac{1}{m_-})) \geq 0$), we obtain after applying the second part of Proposition \ref{prop:normalized-variational-inequality} to $g^+$,
\begin{align}
    h(g)-h(g^+) &\leq H_2(p_+) + p_-\bigl( 1 + \log{{m_-}} - \log{m_+}-2\log{(e-1)} +\frac{1}{e-1}\bigr) \\
    &= H_2(p_+) + p_-\bigl( 1 -\log{p_-} + \log{p_+}-2\log{(e-1)} +\frac{1}{e-1}\bigr) \\ \label{maximizingexpr}
    &= 2H_2(p_-) + p_-\bigl( 1 -2\log{(e-1)} +\frac{1}{e-1}\bigr) + \log{(1-p_-)}.
\end{align}
Considering the function $x \mapsto -2x\log{x} -2(1-x)\log{(1-x)} + cx + \log{(1-x)}, x \in [0,1],$ where $c = 1-2\log{(e-1)} +\frac{1}{e-1},$ and setting its derivative to zero, we see that the maximum is achieved at $x = 1/e$. Therefore, \eqref{maximizingexpr} is maximized for $p_- = 1/e$ and therefore
\begin{align}
h(g)-h(g^+) &\leq 2H_2(1/e) + 1/e\bigl( 1 -2\log{(e-1)} +\frac{1}{e-1}\bigr) + \log{(1-1/e)} \\
&= \frac{e}{e-1}H_2(1/e)
\end{align}
after a short calculation, with equality if and only if $p_- = 1/e$. The equality case follows from the equality case in the functional form of Gr{\"u}nbaum's inequality for log-concave densities, see for example~\cite[Theorem 1.4.]{fradelizi2026grunbaum}.
\end{proof}

Next we establish the lower bound on the entropy of a density on the positive reals that was used in the proof of Theorem \ref{thm:sharp-reverse-grunbaum-shannon}, but which may also be of independent interest. It implies in particular that the entropy of the restriction of a centered log-concave density on $\mathbb R_+$ is minimized by an exponential; this is analogous to \cite{MNR} where the entropy among log-concave densities with fixed variance is minimized for an exponential.
\begin{proposition}
\label{prop:normalized-variational-inequality}
Let \(f\) be a log-concave probability density on \(\mathbb R_+\) such that $f(0)>0$ and let $\kappa:=(\log f)'_+(0)$.

If 
$$
        \sqrt{f(0)}\ge \kappa\sqrt{\int_0^\infty tf(t) \ dt},
$$
then
\begin{equation} \label{variationalineq}
        h(f)\ge \log\Biggl({\int_0^\infty tf(t)\dd t}\Biggr)+2\log(e-1)-\frac1{e-1},
\end{equation}
where the right hand side is the entropy of the truncated exponential $f^*(x) = \frac1{\lambda(e-1)^2}e^{\frac{x}{\lambda (e-1)}}\1_{(0,\lambda (e-1))}(x), $ with $\lambda = \int_0^\infty tf(t)\dd t.$

In particular, if $g$ is a centered log-concave density on \(\mathbb R\), i.e., 
$
\int_{\mathbb R} t g(t)\dd t=0,
$
then \eqref{variationalineq} holds with $g^+$ in place of $f$, where
$
g^+(t):=\frac{g(t)}{\int_0^\infty g(x)\dd x}\1_{\{t>0\}}.
$ 
\end{proposition}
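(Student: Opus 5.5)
The plan is to handle the second part first, since it is elementary, and then spend most of the effort on the first part.

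\textbf{Reduction of the centered case.} Let $g$ be centered and set $f=g^+$. Then $f(0)=g(0)/p_+$ and $\kappa=(\log g)'_+(0)$. If $\kappa\le 0$ the hypothesis holds trivially, so assume $\kappa>0$. By concavity of $\log g$, for $t\le 0$ we have $\log g(t)\le \log g(0)+\kappa t$, that is, $g(t)\le g(0)e^{\kappa t}$. Centering gives $\int_0^\infty t g=\int_{-\infty}^0|t|g$, and therefore
\[
p_+\int_0^\infty t f(t)\dd t=\int_{-\infty}^0 |t|g(t)\dd t\le g(0)\int_0^\infty te^{-\kappa t}\dd t=\frac{g(0)}{\kappa^2}.
\]
This is exactly $\kappa^2\int_0^\infty tf\le f(0)$, which is the hypothesis of the first part. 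So the second statement follows from the first.

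\textbf{Main inequality: setup.} By scaling ($h(f_s)=h(f)+\log s$, the mean scales by $s$, and the hypothesis is scale invariant) we may assume $\int_0^\infty tf=1$. The claim becomes $h(f)\ge 2\log(e-1)-\frac1{e-1}$. One checks that $f^*$ satisfies the hypothesis with equality, since $\kappa^2\lambda=f(0)=\frac{1}{\lambda(e-1)^2}$.

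\textbf{Reduction to three pieces.} Fix the values $a=f(0)$ and $\kappa$, with $a\ge \kappa^2$. Consider the convex set of log-concave $f$ on $\mathbb R_+$ satisfying $\int f=1$, $\int tf=1$, $f(0)=a$ and $f'(0^+)/f(0)\le\kappa$. The last condition is the point where log-concavity interacts with the constraint. Entropy is concave in $f$, so its minimum over this set is attained at extreme points. By the degrees of freedom method of \cite{fradeliziguedon}, extreme points under two linear constraints, together with the pinned value and slope at $0$, are log-affine on at most three intervals. This reduces the problem to a finite-dimensional family: densities $e^{-\phi}$ with $\phi$ convex and piecewise affine with at most three pieces, with a finite or infinite right endpoint.

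\textbf{One-piece reduction and final computation.} On this family I would write the entropy, the two moment constraints and the inequality constraint $\kappa^2\int tf\le f(0)$ as smooth functions of the slopes and breakpoints. I would then apply the Fritz--John conditions of Lemma~\ref{FJ} at a minimizer. The aim is to show that the multipliers are incompatible with a genuine breakpoint, so that each interior kink can be removed, giving a strict decrease or a contradiction with stationarity. I expect this step to be the main obstacle: tracking the signs of the multipliers at the breakpoints and handling degenerate configurations such as infinite support and vanishing pieces.

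Once only one piece remains, $f=c\,e^{\kappa t}\mathbf 1_{[0,L]}$. With the normalizations $\int f=1$ and $\int tf=1$, the entropy becomes an explicit function of a single parameter $u=\kappa L$. The constraint $a\ge\kappa^2$ restricts $u$ to a range whose boundary is $u=1$. A calculus check should show that the entropy is minimized at the boundary point $u=1$, which is $f^*$ and gives the value $2\log(e-1)-\frac1{e-1}$.
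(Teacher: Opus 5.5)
Your outline follows the paper's route step for step: the tangent-line bound of Lemma~\ref{lemmasatisfiescond} for the centered case, scaling to mean one, degrees of freedom down to three log-affine pieces, Fritz--John to remove kinks, and a one-parameter computation. However, the step on which everything rests is only announced. You say you would apply Lemma~\ref{FJ} and ``aim'' to show that the multipliers are incompatible with a breakpoint, but you give no mechanism for the contradiction. That is the genuine gap, since it is the only nontrivial part of the argument.

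What makes it work in the paper is the following.
\begin{itemize}
\item Write $u(x)=a+\kappa x-\lambda_1(x-r_1)_+-\lambda_2(x-r_2)_+$ on $(0,R)$, impose the endpoint condition as $\kappa-e^{a/2}\le 0$, and set $W=-(1+u)+\alpha+\beta x$.
\item The $\lambda_j$- and $r_j$-equations give $\int_{r_j}^R Wf=\int_{r_j}^R (x-r_j)Wf=0$.
\item On the last piece $W$ is affine, i.e.\ a combination of $1$ and $x-r_2$. The two equations for $j=2$ therefore give $\int_{r_2}^R W^2 f=0$, so $W\equiv 0$ on $(r_2,R)$.
\item Since $W'=\beta-u'$ jumps by $\lambda_2$ at the kink, $W(x)=\lambda_2(r_2-x)>0$ on $(r_1,r_2)$. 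This contradicts $\int_{r_1}^R Wf=0$.
\item With a single kink at $r$, one gets $W=\lambda(r-x)>0$ on $(0,r)$ and $W=0$ after. This contradicts the $\kappa$-equation $\int_0^R xWf+\gamma=0$, because $\gamma\ge 0$.
\item Beforehand, the same square trick applied to $\alpha+\beta x$ shows the entropy multiplier is nonzero: otherwise $\alpha=\beta=0$, and the $a$-equation forces $\gamma=0$.
\end{itemize}
The form of the constraint matters here. With your $\kappa^2\le f(0)$ the $\kappa$-equation reads $\int_0^R xWf+2\kappa\gamma=0$. For $\kappa\le 0$ you would additionally have to show that the constraint is inactive. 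It is: $f\le f(0)e^{\kappa x}$ gives $\kappa^2\int xf<f(0)$ for compactly supported $f$.

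The reduction to three pieces is also not justified as you wrote it. The set of log-concave densities with prescribed mass, mean and endpoint data is not convex, since a mixture of log-concave densities need not be log-concave. So ``concave functional, hence minimum at extreme points'' does not apply directly, and on $\mathbb R_+$ a minimizer need not exist a priori.

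The paper handles both points as follows.
\begin{itemize}
\item It approximates by compactly supported densities.
\item It proves that the truncated class $\mathcal F_R$ on $[0,R]$ is weakly compact and that entropy is continuous on it, and takes a minimizer.
\item If the minimizer is not three-piece log-affine, it produces a bounded $W$ vanishing near $0$, with $\int Wf=\int xWf=0$, such that $f(1\pm tW)$ both lie in $\mathcal F_R$. Strict convexity of $x\mapsto x\log x$ then contradicts minimality.
\end{itemize}
The fact that $W$ vanishes near $0$ is exactly what preserves the nonlinear endpoint condition, and your ``pinned value and slope'' has to be handled the same way.

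The rest of your proposal is fine: the centered case, the scaling, and the final calculus. The last amounts to $\frac{dh}{dt}\le 0$, which follows from $(e^t-1)^2\ge t^2e^t$, together with the constraint being $t\le 1$.
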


The proof is divided in several lemmas.

\begin{lemma} \label{lemmasatisfiescond}
Let $f$ be a centered log-concave density. If
$$
\frac{d}{dx} \log{f(x)}\Bigg|_{0^+}= \bigl(\log{f}\bigr)_+'(0) \geq 0,
$$
then 

\[
f(0)\ge {\Bigl(\bigl(\log{f}\bigr)_+'(0)\Bigr)^2}{\int_0^\infty tf(t) \ dt}.
\]
\end{lemma}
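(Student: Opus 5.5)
The plan is to combine the tangent-line bound coming from concavity of $\log f$ at the origin with the centering condition, which converts the positive first moment into a negative one. If $\kappa:=(\log f)'_+(0)=0$ the claimed inequality reads $f(0)\ge 0$ and is trivial, so we assume $\kappa>0$. We may also assume $f(0)>0$; otherwise $\log f(0)=-\infty$ and the right derivative is not a finite nonnegative number.

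\emph{Step 1: tangent-line bound on the left half-line.} Write $\phi=-\log f$, which is convex. For a convex function, the right derivative at $0$ is at least every difference quotient taken to the left of $0$. Hence, for $x<0$,
\[
\frac{\log f(0)-\log f(x)}{0-x}\ge \kappa, \qquad\text{that is,}\qquad \log f(x)\le \log f(0)+\kappa x .
\]
This holds whether the left difference quotient is finite or $-\infty$ (the latter when $f(x)=0$). Therefore
\[
f(x)\le f(0)e^{\kappa x}, \qquad x\le 0 .
\]
This is the only place where log-concavity and the sign assumption $\kappa\ge 0$ enter.

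\emph{Step 2: use the centering.} Since $\int_{\mathbb R} tf(t)\,dt=0$,
\[
\int_0^\infty tf(t)\,dt=\int_{-\infty}^0 |x| f(x)\,dx .
\]
Inserting the bound from Step 1 gives
\[
\int_{-\infty}^0 |x| f(x)\,dx\le f(0)\int_0^\infty s e^{-\kappa s}\,ds=\frac{f(0)}{\kappa^2}.
\]
Rearranging yields $f(0)\ge \kappa^2\int_0^\infty tf(t)\,dt$, which is the claim. Equality forces $f(x)=f(0)e^{\kappa x}$ on all of $(-\infty,0]$, but the statement does not require this characterization.

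There is no serious obstacle here. The only point needing care is the one-sided derivative convention in Step 1: we use the right derivative, which for a concave function bounds $\log f$ from above on the left side as well. The difference quotients of $\log f$ are non-increasing, so left quotients are $\ge$ the right derivative, which gives exactly the needed inequality. We must also check that Step 1 is legitimate when $f$ vanishes on part of $(-\infty,0)$, and there the bound holds trivially. In the application to Proposition~\ref{prop:normalized-variational-inequality}, this lemma shows that the hypothesis $\sqrt{f(0)}\ge\kappa\sqrt{\int_0^\infty tf}$ is satisfied by $g^+$ when $\kappa\ge0$; the case $\kappa<0$ is automatic there. The normalization by $\int_0^\infty g$ scales both sides of the inequality identically.
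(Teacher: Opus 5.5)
Your proof is correct and follows essentially the same argument as the paper. Both use the tangent bound $f(x)\le f(0)e^{\kappa x}$ for $x\le 0$, coming from concavity of $\log f$, then use centering to move the first moment onto the left half-line and integrate. One minor slip: when $f(x)=0$ the left difference quotient is $+\infty$, not $-\infty$, which is still $\ge\kappa$. Your separate treatment of $\kappa=0$ is a small improvement, since in that case the paper's bound $f(0)/\kappa^2$ is infinite.
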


\begin{proof}

Since \(f\) is log-concave, for \(t<0\),

\[
\log f(t)
\le
\log f(0)+\bigl(\log{f}\bigr)_+'(0)t.
\]

Because \(f\) is centered,

\[
\int_{-\infty}^0 (-t){f(t)}\dd t
=
\int_0^\infty t{f(t)}\dd t
.
\]
Since \(\bigl(\log{f}\bigr)_+'(0) \geq 0\),

\[
\int_0^\infty t{f(t)}\dd t \leq f(0)\int_{-\infty}^0 (-t)e^{\bigl(\log{f}\bigr)_+'(0)t}\dd t
=
\frac{f(0)}{{\bigl(\bigl(\log{f}\bigr)_+'(0)\bigr)^{2}}
}\]
and the claim follows.
\end{proof}

\begin{proposition}
\label{prop:existence-reduction}
Let \(R>1\), and let \(\mathcal F_R\) be the class of log-concave
probability densities \(f=e^u\) supported on \([0,R]\) such that
\[
\int_0^R f(x)\,dx=1,
\qquad
\int_0^R xf(x)\,dx=1,
\]
and such that
\[
f(0)>0
\qquad\text{and}\qquad
\sqrt{f(0)}\geq u'_+(0).
\]
Then \(\mathcal F_R\) is compact with respect to weak convergence of the associated probability measures,
and entropy is a continuous functional on \(\mathcal F_R\). Consequently, the entropy functional attains its minimum on
\(\mathcal F_R\). Moreover, if \(f\in\mathcal F_R\) is a minimizer,
then \(\log f\) is affine on at most three intervals.
\end{proposition}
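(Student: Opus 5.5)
Compactness comes first. Every $f\in\mathcal F_R$ is a log-concave probability density on $[0,R]$ with mean $1$. One-dimensional log-concave densities with fixed mean have uniformly bounded sup norm: $\|f\|_\infty\le C$ with $C$ absolute, since $\|f\|_\infty\,\mathrm{sd}(f)$ is bounded above for log-concave densities and the standard deviation is comparable to the mean on $\mathbb R_+$. Given a sequence $f_k=e^{u_k}$ in $\mathcal F_R$, the concave functions $u_k$ are bounded above uniformly. They are also bounded below on compact subintervals of the interior of the limiting support, because the mass is $1$, the support lies in $[0,R]$ and concavity applies. By the standard compactness of concave functions (Blaschke-type selection/Helly), a subsequence converges locally uniformly on the interior of the limiting support and a.e. to a concave $u$. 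Since all supports lie in the bounded interval $[0,R]$, dominated convergence with the bound $C$ gives $\int f=1$ and $\int xf=1$ in the limit, so weak convergence upgrades to $L^1$ convergence of densities and the limit is again a log-concave density. Continuity of entropy follows in the same way: $|f\log f|$ is uniformly bounded by $\max(C\log C, 1/e)$ on a set of finite measure, so dominated convergence applies. The delicate part is preserving the constraints $f(0)>0$ and $\sqrt{f(0)}\ge u'_+(0)$. The limit has mean $1$, so it is not concentrated near $0$ and cannot blow up. Since $\int xf = 1$ with support in $[0,R]$, $R>1$, I would show $f_k(0)$ stays bounded below. The constraint forces $u_k'(0)\le\sqrt{f_k(0)}\le\sqrt C$, and then concavity together with the mean constraint gives a lower bound. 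Upper semicontinuity of $u\mapsto u'_+(0)$ and continuity of $f(0)$ under the limit (after possibly redefining at the endpoint via the right limit) give closedness. I expect this to be the main technical obstacle. The minimum is then attained by the Weierstrass theorem.

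For the three-piece structure I use the degrees-of-freedom method of \cite{fradeliziguedon}. Entropy $-\int f\log f$ is concave in $f$. The constraint set is the intersection of the convex set of log-concave densities on $[0,R]$ with two affine constraints ($\int f=1$, $\int xf=1$) and the additional condition $\sqrt{f(0)}\ge u_+'(0)$. A minimizer of a concave functional on a convex-like set may be taken to be an extreme point, or more precisely cannot be written as a nontrivial midpoint of admissible perturbations. Suppose $\log f$ has at least four affine pieces, or is not piecewise affine on some subinterval. Then I construct the usual perturbations $f_\varepsilon=f(1+\varepsilon\sum_i c_i\phi_i)$, where the $\phi_i$ are localized functions that preserve log-concavity for small $|\varepsilon|$. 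These are obtained by bending $u$ at its breakpoints or in strictly concave regions, with the $\phi_i$ supported away from $0$ so that $f(0)$ and $u'_+(0)$ are unchanged. Having at least three such independent directions, I choose $(c_i)$ nonzero to kill the two linear constraints $\int f\phi=0$ and $\int xf\phi=0$. Then $f_{\pm\varepsilon}\in\mathcal F_R$ and $f=\tfrac12(f_\varepsilon+f_{-\varepsilon})$. Strict concavity of $t\mapsto -t\log t$ gives $h(f)>\min(h(f_\varepsilon),h(f_{-\varepsilon}))$, contradicting minimality.

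To count correctly, I allow one extra piece touching $0$, where perturbations are frozen by the boundary constraint. This gives at most three affine pieces. Two pieces account for the two equality constraints, and the first piece is pinned by the condition at $0$ (or the count is absorbed by the endpoint $R$).
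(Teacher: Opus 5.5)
Your degrees-of-freedom part is essentially the paper's argument: perturbations $f(1\pm tW)$ with $W$ built from the bends $(x-r_j)_+$ and vanishing on an initial interval, chosen so that $\int Wf=\int xWf=0$, then strict concavity of $-t\log t$. Your count is also right, since three breakpoints give three such directions, all vanishing on $[0,r_1]$. The gap is in the compactness part.

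\textbf{The uniform bound.} The bound you start from is false as stated. Log-concave densities on $[0,R]$ with mean $1$ need not have uniformly bounded sup norm, and the standard deviation of a log-concave density on $\mathbb R_+$ is not bounded below by a multiple of its mean. Take $f_\lambda\propto e^{\lambda x}$ on $[0,b_\lambda]$, with $b_\lambda\to1$ chosen so that the mean is $1$. It is log-concave, has $f_\lambda(0)>0$ and $\|f_\lambda\|_\infty\sim\lambda$, and converges weakly to $\delta_1$. So without the boundary condition the class is not even closed.

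The constraint $u'_+(0)\le\sqrt{f(0)}$ is therefore not merely something to preserve in the limit; it is what produces compactness. It is equivalent to $f(x)\le f(0)e^{\sqrt{f(0)}x}$ on $[0,R]$. Combined with $f(0)\le 1$, which follows from Borell's Lemma~\ref{lem:moment-upper-logconcave} since $1=\int xf\le 1/f(0)$, this gives $\|f\|_\infty\le e^R$. It then gives $f(0)\ge e^{-R}/R$ from $\int f=1$. Your lower bound on $f_k(0)$ also relied on the nonexistent absolute constant $C$.

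\textbf{Closedness of the constraint.} Here you need \emph{lower} semicontinuity of $u\mapsto u'_+(0)$, not upper. Upper semicontinuity is false: $u_k=\min(x,1/k)$ has $u'_{k,+}(0)=1$ while $u_k\to0$. It would also not give closedness of $\{u'_+(0)\le\sqrt{f(0)}\}$. Lower semicontinuity does hold, because for concave $u$ one has $u'_+(0)=\sup_{x>0}(u(x)-u(0))/x$. However, it holds only under convergence at the endpoint, i.e.\ $f_k(0)\to f(0^+)$. That convergence is the real content of this step, and you leave it open.

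The paper obtains it by a squeeze. Pass to a subsequence with $f_k(0)\to l>0$. Take limits in the upper bound $f_k(x)\le f_k(0)e^{\sqrt{f_k(0)}x}$. Take limits also in the log-concavity lower bound $f_k(x)\ge f_k(0)^{1-x/y}f_k(y)^{x/y}$ for $0<x<y$, with $y$ interior to the limiting support, where local uniform convergence holds. Letting $x\downarrow0$ gives $l=f(0^+)$. The limit density then satisfies the constraint directly in its pointwise form $f(x)\le f(0)e^{\sqrt{f(0)}x}$, so no semicontinuity of $u'_+(0)$ is needed at all.
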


\begin{proof}

Let $(\mu_n)_n$ be a sequence of probability measures in $\mathcal F_R $  which converges weakly to a measure $\mu$. Denote by $f_n$  the density of $\mu_n$. 
First, we observe that the set of log-concave probability measures on a compact set is compact with respect to weak convergence, hence $\mu$ is log-concave on $[0,R]$. Thus $\mu$ is either a Dirac measure or has a log-concave density.
The boundary condition at the origin is equivalent to
\[
f_n(x)\leq f_n(0)e^{\sqrt{f_n(0)}\,x},
\qquad x\in[0,R].
\]
The normalization $\int_0^R f_n=1$ prevents $f_n(0)$ from tending to
zero. Moreover, the condition $\int_0^R xf_n(x)\,dx=1$ forces a
uniformly positive amount of mass to lie away from the origin, and
log-concavity then prevents $f_n(0)$ from tending to infinity.
Hence $(f_n)_n$, is uniformly bounded. It follows that the weak limit $\mu$ cannot be a Dirac mass, and
therefore has a log-concave density $f$ with mean $1$. After passing
to a subsequence, assume that $f_n(0)\to l>0$. By
\cite[Fact~2.5]{corderofradelizi}, $f_n\to f$ uniformly on compact
subsets of $\operatorname{int}(\operatorname{supp}f)$. Passing to the
limit in the upper bound above and in the corresponding lower bound
given by log-concavity, and then letting $x\downarrow0$, yields
$l=f(0)$. Hence $f$ also satisfies the boundary condition at the origin, so
$f\in\mathcal F_R$. Thus $\mathcal F_R$ is weakly closed, hence
compact. Finally, the uniform bound on $(f_n)_n$ and dominated
convergence give
\[
h(f_n)\longrightarrow h(f).
\]

We now turn to the structural characterization of the minimizers.
Let \(f\in\mathcal F_R\) be a minimizer and write \(f=e^{-V}\), with
\(V\) convex. 
Assume, by contradiction, that \(f\) is not
three-piece log-affine. By the one-dimensional degree-of-freedom
construction of
\cite[Proof of Proposition~2]{fradeliziguedon}, there are enough
independent admissible perturbation directions, all vanishing on an
initial interval. Taking a non-trivial linear combination, we may
choose a bounded continuous function \(W\), still vanishing near
\(0\), such that, for all sufficiently small \(t>0\),
\[
f_\pm:=f(1\pm tW)
\]
are log-concave and
\[
\int_0^R Wf=0,
\qquad
\int_0^R xWf=0.
\]
The two identities preserve mass and mean, while the fact that \(W\)
vanishes near \(0\) preserves the endpoint condition. Hence
\[
f_+,f_-\in\mathcal F_R.
\]

Finally,
\[
f=\frac{f_++f_-}{2}.
\]
Since \(x\mapsto x\log x\) is strictly convex and \(W\neq0\),
\[
h(f)>
\frac12h(f_+)+\frac12h(f_-).
\]
Thus at least one of \(f_+\) and \(f_-\) has entropy strictly smaller
than \(h(f)\), contradicting the minimality of \(f\). Therefore
\(\log f\) is affine on at most three intervals.
\end{proof}

We will use the following optimization lemma to show that in fact a minimizer must be one-piece log-affine. 
\begin{lemma}[Fritz--John necessary conditions; Proposition 3.3.5 in \cite{bertsekas}] \label{FJ}
Let 

\[
F,G_1,\dots,G_r,C_1,\dots,C_s\in C^1(\mathbb R^n).
\]
Suppose \(z_0\in \mathbb{R}^n\) is a local minimizer of \(F\) subject to

\[
G_i(z)=0,\qquad i=1,\dots,r,
\]
and

\[
C_j(z)\le0,\qquad j=1,\dots,s.
\]
Then there exist multipliers

\[
\tau\ge0,\qquad
\alpha_1,\dots,\alpha_r\in\mathbb R,
\qquad
\gamma_1,\dots,\gamma_s\ge0,
\]
not all zero, such that

\[
\tau\nabla F(z_0)
+
\sum_{i=1}^r \alpha_i\nabla G_i(z_0)
+
\sum_{j=1}^s \gamma_j\nabla C_j(z_0)=0.
\]
Moreover,

\[
\gamma_j C_j(z_0)=0,
\qquad j=1,\dots,s.
\]
\end{lemma}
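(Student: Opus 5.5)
The lemma is quoted from \cite{bertsekas} and is used only as a black box, but I would prove it by the classical quadratic penalty argument, which needs nothing beyond compactness and the first-order condition for unconstrained minimization. Fix $\varepsilon>0$ such that $F(z_0)\le F(z)$ for every feasible $z$ in the closed ball $\bar B:=\bar B(z_0,\varepsilon)$. For $k\in\mathbb N$, consider the penalized function
\[
F_k(z):=F(z)+\frac k2\sum_{i=1}^r G_i(z)^2+\frac k2\sum_{j=1}^s \bigl(C_j(z)^+\bigr)^2+\frac12|z-z_0|^2 .
\]
Each $F_k$ is $C^1$, since $(t^+)^2$ is $C^1$ with derivative $2t^+$. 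It is continuous on the compact set $\bar B$, so it attains its minimum there at some point $z_k$.

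\textbf{Step 1: $z_k\to z_0$.} Since $z_0$ is feasible, $F_k(z_0)=F(z_0)$, and therefore
\[
F(z_k)+\frac k2\Bigl(\sum_i G_i(z_k)^2+\sum_j (C_j(z_k)^+)^2\Bigr)+\frac12|z_k-z_0|^2\le F(z_0).
\]
The function $F$ is bounded below on $\bar B$. Hence the bracketed penalty sum is $O(1/k)$, so every limit point $\bar z$ of $(z_k)$ is feasible. Passing to the limit in the displayed inequality gives $F(\bar z)+\frac12|\bar z-z_0|^2\le F(z_0)$. Local minimality gives $F(z_0)\le F(\bar z)$, so $\bar z=z_0$. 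Thus $z_k\to z_0$, and in particular $z_k$ lies in the interior of $\bar B$ for all large $k$.

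\textbf{Step 2: first-order conditions and normalization.} Since $z_k$ is an interior minimizer for large $k$, we have $\nabla F_k(z_k)=0$, that is,
\[
\nabla F(z_k)+\sum_i \xi_i^k\nabla G_i(z_k)+\sum_j \zeta_j^k\nabla C_j(z_k)+(z_k-z_0)=0,
\]
where $\xi_i^k:=kG_i(z_k)$ and $\zeta_j^k:=kC_j(z_k)^+\ge0$. Set
\[
\delta_k:=\Bigl(1+\sum_i(\xi_i^k)^2+\sum_j(\zeta_j^k)^2\Bigr)^{1/2}\ge1 .
\]
Divide the identity by $\delta_k$. The vector $(1,\xi^k,\zeta^k)/\delta_k$ lies on the unit sphere, so a subsequence converges to some unit vector $(\tau,\alpha,\gamma)$ with $\tau\ge0$ and $\gamma\ge0$; in particular the multipliers are not all zero. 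The gradients are continuous and $(z_k-z_0)/\delta_k\to0$, so letting $k\to\infty$ yields
\[
\tau\nabla F(z_0)+\sum_i\alpha_i\nabla G_i(z_0)+\sum_j\gamma_j\nabla C_j(z_0)=0 .
\]

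\textbf{Step 3: complementary slackness.} If $C_j(z_0)<0$ for some $j$, then by continuity $C_j(z_k)<0$ for all large $k$. Hence $\zeta_j^k=0$ eventually and $\gamma_j=0$. If instead $C_j(z_0)=0$, then $\gamma_jC_j(z_0)=0$ trivially. So $\gamma_jC_j(z_0)=0$ for every $j$.

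The only genuinely delicate point is Step 1. The proximal term $\frac12|z-z_0|^2$ is what forces the penalized minimizers to converge to the given $z_0$ rather than to some other local minimizer. Once that convergence holds, the rest is routine: the multipliers come from normalizing the penalty gradients, and the compactness of the unit sphere prevents them from all vanishing. This is why we only get Fritz--John multipliers, with a possibly zero $\tau$, and not KKT multipliers; no constraint qualification is assumed.
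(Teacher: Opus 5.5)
Your proof is correct and complete. The paper does not prove this lemma itself but cites Proposition 3.3.5 of \cite{bertsekas}, and the proof there is exactly your quadratic-penalty argument: the same proximal term $\frac12|z-z_0|^2$, the same multipliers $\xi_i^k, \zeta_j^k$ normalized by $\delta_k$, and the same passage to a limit on the unit sphere. The only difference is that Bertsekas extracts a stronger ``enhanced'' sign condition along the sequence $z_k$ and deduces complementary slackness from it, while you prove $\gamma_j C_j(z_0)=0$ directly, which is all the statement requires.
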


We are finally ready to give the proof of Proposition \ref{prop:normalized-variational-inequality}.
\begin{proof} [Proof of Proposition \ref{prop:normalized-variational-inequality}.]
    
First note that, by approximating $f$ with compactly supported densities, we may assume that $f$ has a compact support. Then, we observe that, by considering the mean-normalized density  
$ 
\mu f(\mu x), \quad x>0,
$
where $\mu = {\int_0^{\infty}f(x)xdx},$
and using the scaling property of differential entropy, it suffices to prove the normalized inequality
\[
h(f)\ge 2\log(e-1)-\frac1{e-1}
\]
for any log-concave density $f$ on $[0,R]$ with $\int_{\mathbb R_+}f(x)xdx = 1$ and $\sqrt{f(0)} \geq \kappa$. 
By Proposition \ref{prop:existence-reduction}, the entropy functional attains its minimum on
\(\mathcal F_R\) at some density \(f_R\), and every such minimizer is
three-piece log-affine. Therefore, it suffices to prove the normalized
inequality for a three-piece log-affine minimizer. Renaming this
minimizer \(f\), it can be parametrized as
\[
f(x)=e^{u(x)}\1_{(0,R)}(x),
\]
where \(R\in(0,\infty)\) and
\[
u(x)=a+\kappa x-\lambda_1(x-r_1)_+-\lambda_2(x-r_2)_+.
\]
Here
$
0<r_1<r_2<R$ and 
$\lambda_1,\lambda_2\ge0.
$
The slopes of \(u\) are $\kappa,
\kappa-\lambda_1$ and $\kappa-\lambda_1-\lambda_2.$
In particular, concavity is exactly the condition
\[
\lambda_1,\lambda_2\ge0.
\]
There is a change of slope at a point \(r_j\) precisely when
\[
\lambda_j>0.
\]
We will use the Fritz-John necessary conditions to show that if a density of this form is a local minimizer of the entropy functional, then it must be the case that $\lambda_j = 0$ for $j=1,2$ and thus any local minimizer is one-piece log-affine. 

We are minimizing locally the entropy 
\[
h(f)=-\int_0^R u(x)e^{u(x)}\dd x.
\]
in the variables $(a, \kappa, \lambda_1,\lambda_2,r_1, r_2) \in \mathbb R^6$ subject to the constraints
\[
\int_0^R e^{u(x)}\dd x=1,
\quad 
\int_0^R x e^{u(x)}\dd x=1 \quad \text{and } \quad \kappa-e^{\frac{a}{2}}\le0.
\]
We will assume for contradiction that at a local minimizer $\lambda_i, r_i >0$  for $i=1,2$ and hence by the complementary slackness property \cite[Proposition 3.3.5, (iv)]{bertsekas} the corresponding multipliers for the conditions $\lambda_1,\lambda_2 \geq 0$ and $0\leq r_1 \leq r_2 \leq R$ are $0$, since the constraints are inactive. Alternatively, we may consider the corresponding functions extended in a continuously differentiable way on the whole $\mathbb R^6$. 

The partial derivatives of \(u\) are

\[
\frac{\partial u}{\partial a}=1,
\qquad
\frac{\partial u}{\partial \kappa}=x, \quad
\frac{\partial u}{\partial \lambda_j}=-(x-r_j)_+
\text{ and for }
 x\neq r_j, \quad 
\frac{\partial u}{\partial r_j}
=
\lambda_j\1_{\{x>r_j\}}.
\]

Suppose first that \(f\) is a local minimizer with two changes of slope. Thus
\[
\lambda_1,\lambda_2>0
\qquad \text{and } \quad
0<r_1<r_2<R.
\]


By the Fritz--John necessary conditions, Lemma \ref{FJ}, there exist multipliers
$\tau\ge0,$ 
$\alpha,\beta\in\mathbb R,$ 
$\gamma\ge0,$
not all zero, such that the gradient of

\[
\tau h(f)
+
\alpha\left(\int_0^R f-1\right)
+
\beta\left(\int_0^R x f-1\right)
+
\gamma(\kappa-e^{\frac{a}2})
\]
vanishes. 

We first show that
$
\tau>0.$
Assume \(\tau=0\). Since \(\kappa-e^{\frac{a}{2}}\) does not depend on \(r_2\) or \(\lambda_2\), differentiating with respect to \(r_2\) and \(\lambda_2\) gives

\[
\int_{r_2}^R (\alpha+\beta x)f(x)\dd x=0
\quad \text{ and } \quad
\int_{r_2}^R (x-r_2)(\alpha+\beta x)f(x)\dd x=0.
\]
On \((r_2,R)\), the function \(\alpha+\beta x\) is affine. Writing 

\[
\alpha+\beta x=A+B(x-r_2),
\]
we have

\[
\int_{r_2}^R (\alpha+\beta x)^2f(x)\dd x
=
A\int_{r_2}^R (\alpha+\beta x)f(x)\dd x
+
B\int_{r_2}^R (x-r_2)(\alpha+\beta x)f(x)\dd x
=
0.
\]
Since \(f>0\) on \((r_2,R)\), it follows that
$ 
\alpha+\beta x=0
$ on $(r_2,R)$ and hence

\[
\alpha=\beta=0.
\]
Then differentiating with respect to \(a\) gives

\[
0=\gamma\frac{\partial}{\partial a}(\kappa-e^\frac{a}{2})
=
-\frac12\gamma e^{\frac{a}{2}}.
\]
Thus \(\gamma=0\), contradicting that the multipliers are not all zero. Therefore
$
\tau>0.
$
Dividing all multipliers by \(\tau\), we may assume \(\tau=1\).

Denote as shorthand

\[
W(x):=-(1+u(x))+\alpha+\beta x.
\]
Differentiating the Lagrangian with respect to \(\lambda_j\) gives

\begin{equation}
\int_{r_j}^R (x-r_j)W(x)f(x)\dd x=0.
\label{eq:danverm1}
\end{equation}

Differentiating with respect to \(r_j\) gives

\[
\lambda_j\int_{r_j}^R W(x)f(x)\dd x = 0.
\]
Since at \(x = r_j\) there is a change of slope, we must have \(\lambda_j>0\). Hence

\begin{equation}
\int_{r_j}^R W(x)f(x)\dd x=0.
\label{eq:danverm2}
\end{equation}

By \ref{eq:danverm1} and \ref{eq:danverm2} for $j=2$, we get

\[
\int_{r_2}^R Wf=0,
\qquad \text{and }
\int_{r_2}^R (x-r_2)Wf=0.
\]
On \((r_2,R)\), \(u\) is affine, so \(W\) is affine. Writing
\(
W(x)=A+B(x-r_2),
\)
we have

\[
\int_{r_2}^R W(x)^2f(x)\dd x
=
A\int_{r_2}^R Wf
+
B\int_{r_2}^R (x-r_2)Wf
=
0.
\]
Since \(f>0\) on \((r_2,R)\),
$
W=0$
on $(r_2,R).$
Thus
$W(r_2+)=0$ and also $W'_+(r_2)=0.$
But

\[
W'(x)=\beta-u'(x),
\]
which yields

\[
\beta -(\kappa -\lambda_1) = W'_-(r_2)=-\lambda_2.
\]
Therefore, on the middle interval \((r_1,r_2)\),
$W'(x) = -\lambda_2$ and so
\[
W(x)=W(r_2) - \int_{x}^{r_2}W'(t)dt = 0 - \int_{x}^{r_2}-\lambda_2dt = \lambda_2(r_2-x)>0.
\]
Also \(W=0\) on \((r_2,R)\). Hence

\[
\int_{r_1}^R W(x)f(x)\dd x
=
\int_{r_1}^{r_2} W(x)f(x)\dd x
>0.
\]
This contradicts \ref{eq:danverm2}
 with \(j=1\). Therefore a local minimizer cannot have two changes of slope.

Now suppose \(f\) is a local minimizer with exactly one change of slope:

\[
u(x)=a+\kappa x-\lambda(x-r)_+,
\qquad
\lambda>0,
\qquad
0<r<R.
\]
The same argument as in the previous case gives multipliers, with  nonzero multiplier for the entropy term, such that the gradient of the Lagrangian vanishes and hence a function

\[
W(x)=-(1+u(x))+\alpha+\beta x
\]
such that

\[
\int_r^R Wf=0,
\quad \text{and} \quad
\int_r^R (x-r)Wf=0.
\]
Since \(W\) is affine on \((r,R)\), the same square argument gives
$W=0$ on $(r,R).$
Thus
\[
W(r+)=0,
\quad \text{and} \quad
W'_+(r)=0.
\]
Differentiating $u$ we get
\[
W'_-(r)=-\lambda.
\]
Therefore, on \((0,r)\),

\[
W(x)=\lambda(r-x)>0.
\]
Now we differentiate the Lagrangian with respect to \(\kappa\). Since

\[
\frac{\partial u}{\partial \kappa}=x \quad \text{and} \quad \frac{\partial}{\partial\kappa}(\kappa-e^{\frac{a}2})=1,
\]
we get

\begin{equation} \label{secondcontr}
\int_0^R xW(x)f(x)\dd x+\gamma = 0.
\end{equation}
But \(W>0\) on \((0,r)\), \(W=0\) on \((r,R)\), and \(f>0\). Since also $\gamma\geq 0$

\[
\int_0^R xW(x)f(x)\dd x + \gamma
\geq
\int_0^r xW(x)f(x)\dd x
>0,
\]
contradicting \eqref{secondcontr}.

Thus a local minimizer cannot have one change of slope either.
Hence \(u\) is affine on its support, and

\[
f(x)=Ce^{\kappa x}\1_{(0,R)}(x),
\]
i.e., every local minimizer is one-piece log-affine. It remains to verify the desired inequality for such a density. Since $f$ is a mean-one density we must have 
\[
C=\frac{\kappa}{e^{\kappa R}-1},\quad \text{and} \quad
\frac{e^{\kappa R}(\kappa R-1)+1}{\kappa(e^{\kappa R}-1)} = 1.
\]
Writing \(t=\kappa R\),
we have
\[
h(f)=-\int_0^R f(x)\log f(x)\ dx
=
\log\Biggl(\frac{(e^t-1)^2}{e^t(t-1)+1}\Biggr)
-\frac{e^t(t-1)+1}{e^t-1}.
\]
The condition \((\log f)'(0)\le \sqrt{f(0)}\) is \(\kappa\le\sqrt C\), which is equivalent to $t\leq 1$. Differentiating the above expression for \(h\) with respect to \(t\), we get
\[
\frac{dh(f)}{dt}=
\frac{e^t(e^t t^2-(e^t-1)^2)}
{(e^t-1)^2(e^t(t-1)+1)}\le0,
\]
since \((e^t-1)^2\ge e^t t^2\). Therefore 
\[
h(f) \geq 2\log(e-1)-\frac1{e-1},
\]
which is the desired inequality.

    The statement for centered densities follows immediately from Lemma \ref{lemmasatisfiescond}.
\end{proof}

 \newpage 
\section*{Appendix}

\subsection*{Basic information theoretic properties}

In this appendix we summarize a few properties of entropy and mutual information. 

We recall here that the differential entropy of a vector $X$ with density $f$ on $\mathbb{R}^n$ is 
$$
h(X) = h(f) := -\int_{\mathbb{R}^n}{f(x)\log{f(x)} dx} \quad \in [-\infty, \infty].
$$
A change of variables in the integral shows that the differential entropy satisfies, for any invertible matrix $A \in \mathbb{R}^{n \times n}$, the scaling property
\begin{equation}
    h(AX) = h(X) + \log{|\det{(A)}|}.
\end{equation}

If $X \in \mathbb{R}^n,Y \in \mathbb{R}^m$ are two random variables (resp. vectors) with joint density $f_{X,Y}$ on $\mathbb{R}^{m+n}$ then the conditional density of $X$ given $Y$ exists a.s. and is given for any $y \in \mathbb{R}^m$ by 
$$
f_{X|Y}(x|y) := \frac{f_{X,Y}(x,y)}{f_Y(y)},
$$
where $f_Y(y) = \int{f_{X,Y}(x,y) dx}$ is the marginal of $Y$.
Then the joint and conditional entropies are defined respectively as 
$$
h(X,Y) := -\int_{\RL^{m+n}}{f(x,y)\log{f(x,y)}dxdy}
$$
and
$$
h(X|Y) := -\int_{\RL^{n+m}}{f_{X,Y}(x,y)\log{f_{X|Y}(x|y)}dxdy}.
$$
The conditional entropy can also be re-written as 
\begin{align} \nonumber
h(X|Y) &= -\int_{\RL^{n+m}}{f_{X,Y}(x,y)\log{f_{X|Y}(x|y)}dxdy} \\ \label{condHexpression}
&= -\int_{\RL^{m}} {f_Y(y)\int_{\RL^n}{f_{X|Y}(x|y)\log{f_{X|Y}(x|y)}dx}dy} \\ \label{condHaverage}
&= \int_{\RL^m}{f_Y(y)h(X|Y=y)}dy, 
\end{align}
where we denote, for $y \in \RL^m$, $h(X|Y=y) := h\bigl(f_{X|Y}(\cdot|y)\bigr).$ More generally, if $Y$ does not have a density, \eqref{condHaverage} can be replaced by 
\begin{equation} 
h(X|Y) = \int{h(X|Y=y) d\mu_Y(y)}
\end{equation}
where $\mu_Y$ stands for the law of $Y$.

The chain rule for entropy \cite{cover:book} is 
\begin{equation} \label{chainrule}
h(X,Y) = h(Y) + h(X|Y) = h(X) + h(Y|X).
\end{equation}

If $(X,Y)$ has joint law $\mu_{X,Y}$ and $X,Y$ have marginal laws $\mu_X$ and $\mu_Y$ respectively, the {\em mutual information} between $X$ and $Y$ is 
\begin{equation} \label{mutualinfodef}
I(X;Y) := \int d\mu_{X,Y}\log{\frac{d\mu_{X,Y}}{d\mu_X \times d\mu_Y}} \geq 0.
\end{equation}
When $(X,Y)$ has a joint density
$$
I(X;Y) = h(X) - h(X|Y),
$$
while if $Y$ is discrete and $X$ has conditional density given $Y$ then 
$$
I(X;Y) = h(X) - \sum_yp_Y(y)h(X|Y=y).
$$
The positivity of mutual information gives the standard property 
\begin{equation} \label{conditionreduceentropy}
    h(X|Y) \leq h(X).
\end{equation}

\bibliographystyle{abbrv}
\bibliography{references}

\end{document}